\newtheorem{theorem}{Theorem}[section]
\newtheorem{lemma}{Lemma}[section]
\theoremstyle{definition}
\newtheorem{definition}{Definition}[section]
\theoremstyle{remark}
\newtheorem{remark}{Remark}[section]
\theoremstyle{proposition}
\newtheorem{proposition}{Proposition}[section]
\numberwithin{equation}{section}
\theoremstyle{corollary}
\theoremstyle{errata}
\newtheorem{errata}{Errata}
\begin{document}

\title{On some cocycles which represent the Dixmier-Douady class in simplicial de Rham complexes}
\author{ Naoya Suzuki}
\date{}
\maketitle
\begin{abstract}
When a Lie group $G$ has a central $U(1)$-extension, there is a cocycle in the simplicial de Rham complex $\Omega^3(NG)$ which represents the Dixmier-Douady class. Mickelsson and Brylinski, McLaughlin constructed a central $U(1)$-extension $\widehat{LSU(2)} \rightarrow LSU(2)$ whose Dixmier-Douady class in $\Omega^3(NLSU(2))$ is a
kind of transgression of the second
Chern class.

In this paper, we consider the case of unitary group and construct a central $U(1)$-extension of $LU(2)$.
After that we construct also a cocycle in a certain triple complex.
\end{abstract}

\section{Introduction}
It is well known that for any Lie group $G$, we can define a simplicial manifold $\{ NG(*) \}$ and we can see the cohomology group of the classifying space $BG$ is isomorphic to the total cohomology 
of the double complex $  {\Omega}^{q} (NG(p)) $.

On the other hand, in \cite{Car} Carey, Crowley, Murray proved that when  a Lie group $G$ admits a central $U(1)$-extension $1 \rightarrow U(1) \rightarrow \widehat{G} \rightarrow G \rightarrow 1$, there exists a  characteristic class of principal $G$-bundle $\pi :Y \rightarrow M $ which belongs to a cohomology group $H^3 (M , \mathbb{Z} )$. It is called the Dixmier-Douady (DD) class associated to the
central $U(1)$-extension $\widehat{G} \rightarrow G$. 
So there is a cocycle on  ${\Omega}^{*} (NG(*)) $ which represents the DD class. 

Mickelsson \cite{Mic} and Brylinski, McLaughlin \cite{Bry}\cite{Bry-ML} constructed a $U(1)$-central extension $\widehat{LSU(2)} \rightarrow LSU(2)$ and a connection form on it whose DD class in $\Omega^3(NLSU(2))$ is a
kind of transgression of the second
Chern class.

In this paper, on the basis of Murray and Stevenson's idea\cite{Mur} \cite{Mur2}, we consider the case of unitary group and construct a central $U(1)$-extension of $LU(2)$.

We consider also the case of a semi-direct product $LSU(2) \rtimes S^1$ and construct a cocycle in a certain triple complex.

\section{Dixmier-Douady class on the double complex }

In this section we recall the relation between the simplicial manifold $NG$ and the classifying space $BG$, then we show that
we can construct a cocycle on ${\Omega}^{*} (NG(*)) $ which represents the DD class when $G$ has a central $U(1)$-extension $\pi : \widehat{G} \rightarrow G $.

\subsection{The double complex on simplicial manifold}

We define a simplicial manifold $NG$ for a Lie group $G$ as follows:\\
\par
$NG(p)  = \overbrace{G \times \cdots \times G }^{p-times}  \ni (g_1 , \cdots , g_p ) :$  \\
face operators \enspace ${\varepsilon}_{i} : NG(p) \rightarrow NG(p-1)  $
$$
{\varepsilon}_{i}(g_1 , \cdots , g_p )=\begin{cases}
(g_2 , \cdots , g_p )  &  i=0 \\
(g_1 , \cdots ,g_i g_{i+1} , \cdots , g_p )  &  i=1 , \cdots , p-1 \\
(g_1 , \cdots , g_{p-1} )  &  i=p.
\end{cases}
$$

Then we recall how to construct a double complex associated to a simplicial manifold.

\begin{definition}
For any simplicial manifold $ \{ X_* \}$ with face operators $\{ {\varepsilon}_* \}$, we define a double complex as follows:
$${\Omega}^{p,q} (X) := {\Omega}^{q} (X_p). $$
Derivatives are:
$$ d' := \sum _{i=0} ^{p+1} (-1)^{i} {\varepsilon}_{i} ^{*}  , \qquad  d'' := (-1)^{p} \times {\rm the \enspace exterior \enspace differential \enspace on \enspace }{ \Omega ^*(X_p) }. $$
\hspace{31em} $ \Box $ \\
\end{definition}

For $NG$ the following holds \cite{Bot2}\cite{Dup2}\cite{Mos}.

\begin{theorem}
{ There exists a ring isomorphism }
$$ H({\Omega}^{*} (NG))  \cong  H^{*} (BG ).$$
{ Here} ${\Omega}^{*} (NG)$ { means the total complex and }$BG${ means the classifying space of principal}
$G${-bundles}.\hspace{21em} $ \Box $ 
\end{theorem}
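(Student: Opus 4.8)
The plan is to interpolate between the two sides with Dupont's complex of simplicial differential forms and to read off the ring isomorphism from the multiplicative structure there. First I would recall that the geometric realization $\|NG\|$ of the simplicial manifold $NG$ (with $NG(p)=G^p$ and the face maps $\varepsilon_i$ written above) is a model for the classifying space: it is the bar construction of the topological group $G$, and for a Lie group it is homotopy equivalent to Milnor's $BG$. Hence $H^*(BG)\cong H^*(\|NG\|;\mathbb{R})$ as graded rings, the product on the right being the cup product. The whole problem is therefore to identify $H^*(\|NG\|;\mathbb{R})$ with the total cohomology of the double complex $\{\Omega^q(NG(p))\}$ of Definition 2.1, ring structures included.

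Next I would introduce the auxiliary double complex $A^{*,*}(NG)$ of differential forms along the affine simplices: an element of total degree $n$ is a family $\phi=(\phi^{(p)})_{p\ge 0}$ with $\phi^{(p)}\in\Omega^n(\Delta^p\times NG(p))$, compatible with the cofaces $\Delta^{p-1}\hookrightarrow\Delta^p$ and the maps $\mathrm{id}\times\varepsilon_i$ (and with degeneracies), the two differentials being the simplicial coboundary and the exterior derivative. Dupont's theorem (\cite{Dup2}, see also \cite{Bot2}\cite{Mos}) provides a canonical isomorphism $H\bigl(\mathrm{Tot}\,A^{*,*}(NG)\bigr)\cong H^*(\|NG\|;\mathbb{R})$, under which the wedge product induces the cup product. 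I would recall the mechanism of that proof: filtering by the simplicial degree $p$ and taking exterior-derivative cohomology first, the contractibility of $\Delta^p$ and the de Rham theorem give
$$ E_1^{p,q}=H^q\bigl(\Delta^p\times NG(p);\mathbb{R}\bigr)\cong H^q\bigl(NG(p);\mathbb{R}\bigr), $$
and the resulting spectral sequence is precisely the one of the simplicial space $NG$ abutting to $H^*(\|NG\|;\mathbb{R})$.

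Then comes the comparison with Definition 2.1. Integration over the simplex,
$$ I\colon \mathrm{Tot}\,A^{*,*}(NG)\longrightarrow \Omega^{*}(NG),\qquad I(\phi)=\Bigl(\textstyle\int_{\Delta^p}\phi^{(p)}\Bigr)_{p\ge 0}, $$
is a morphism of cochain complexes onto the total complex of Definition 2.1: by Stokes' theorem $d\int_{\Delta^p}\phi^{(p)}=\int_{\Delta^p}d\phi^{(p)}-\sum_i(-1)^i\int_{\Delta^{p-1}}\!(\text{coface}_i)^*\phi^{(p)}$, and the compatibility of $\phi$ with the $\varepsilon_i$ turns the boundary term into the simplicial coboundary $d'$; the sign $(-1)^p$ attached to $d''$ in Definition 2.1 is exactly what makes $I$ commute with $d'+d''$. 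I would prove $I$ is a quasi-isomorphism using the same filtration by $p$: on $E_1$ it induces the isomorphism $H^q(\Delta^p\times NG(p);\mathbb{R})\xrightarrow{\ \sim\ }H^q(NG(p);\mathbb{R})$ above — a closed form on $\Delta^p\times NG(p)$ is, modulo $d''$-exact terms, pulled back from $NG(p)$, and on such forms $\int_{\Delta^p}$ is the identity — hence $I$ is an isomorphism on all later pages and on the abutment. One point I would treat carefully is that Definition 2.1 uses only face operators whereas $A^{*,*}$ also records degeneracies; this is resolved by passing to the normalized subcomplex and invoking acyclicity of the degenerate part, exactly as in the Eilenberg–Zilber theory.

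Finally, the ring structure. Under $I$ the product coming from Definition 2.1 is \emph{not} literally the wedge product on $A^{*,*}$; the two differ by an explicit cochain homotopy, which is nothing but the homotopy between the Alexander–Whitney and shuffle products in the Eilenberg–Zilber theorem, realized here by iterated fibre integrations over products of simplices. Producing this homotopy and checking that it does the job shows that $I$ induces a ring homomorphism on cohomology, and by the previous paragraph it is an isomorphism. Chaining the three isomorphisms $H^*(BG)\cong H^*(\|NG\|;\mathbb{R})\cong H\bigl(\mathrm{Tot}\,A^{*,*}(NG)\bigr)\cong H\bigl(\Omega^*(NG)\bigr)$ yields the theorem. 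I expect this last step — establishing multiplicativity of $I$ on cohomology, i.e.\ exhibiting and verifying the Eilenberg–Zilber homotopy — to be the main obstacle; once the comparison complex $A^{*,*}(NG)$ is in place, everything else is a routine spectral-sequence comparison.
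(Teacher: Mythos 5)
Your proposal is correct and follows essentially the same route as the paper's source for this theorem: the paper gives no proof of its own but cites Bott--Shulman--Stasheff, Dupont, and Mostow--Perchick, and your argument (realization $\|NG\|\simeq BG$, Dupont's simplicial de Rham complex $A^{*,*}(NG)$, integration over the simplex as a quasi-isomorphism via the filtration by simplicial degree, and the Eilenberg--Zilber homotopy for multiplicativity) is precisely the content of those references. Your use of the de Rham theorem on each $NG(p)$ and of the niceness of $G$ in identifying $\|NG\|$ with $BG$ also matches the hypotheses singled out in the paper's Remark 2.1.
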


\subsection{The cocycle on the double complex }

Let $\pi : \widehat{G} \rightarrow G $ be a central $U(1)$-extension of a Lie group $G$. Following \cite{Bry-ML} \cite{Car}, we recognize it as a $U(1)$-bundle. Using the face operators $\{ {\varepsilon}_{i} \}: NG(2) \rightarrow NG(1) =G $, 
we can construct a $U(1)$-bundle over $NG(2)= G \times G $ as $\delta \widehat{G} :=  {\varepsilon _0}^* \widehat{G} \otimes ({{\varepsilon} _1}^* \widehat{G})^{{\otimes}-1} \otimes
{{\varepsilon} _2}^* \widehat{G} $.
Here the tensor product $S \otimes T$ of $U(1)$-bundles $S$ and $T$ over $M$ is defined as:
$$ S \otimes T := \bigcup _{x \in M} (S_x \times T_x) / (s,t) \sim (su,tu^{-1}),~~ (u \in U(1)).$$

\begin{lemma}
 $\delta \widehat{G} \rightarrow G \times G $ { is a trivial bundle.}
\end{lemma}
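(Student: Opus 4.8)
The plan is to exhibit an explicit nowhere-vanishing section of $\delta\widehat{G}$ over $G\times G$, which by definition of a $U(1)$-bundle forces triviality. The key structural fact I would exploit is that $\widehat{G}$, being a group, carries its own multiplication, and that the face maps $\varepsilon_0,\varepsilon_1,\varepsilon_2 \colon G\times G \to G$ send $(g_1,g_2)$ to $g_2$, $g_1g_2$, $g_1$ respectively. Choosing any set-theoretic (or local) lift $\hat g \in \widehat{G}$ of $g \in G$, the group structure on $\widehat{G}$ gives a canonical element $\widehat{g_1}\cdot\widehat{g_2} \in \widehat{G}$ sitting over $g_1g_2$, and hence a canonical element of $\varepsilon_1^*\widehat{G}$ over $(g_1,g_2)$. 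Dually, the pair $(\widehat{g_2},\widehat{g_1})$ is a canonical element of $\varepsilon_0^*\widehat{G}\otimes\varepsilon_2^*\widehat{G}$ over $(g_1,g_2)$.

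The main step is then to define the section
$$ s(g_1,g_2) := \bigl[\,(\widehat{g_2},\,\widehat{g_1}),\ (\widehat{g_1}\cdot\widehat{g_2})^{-1}\,\bigr] \in \bigl(\varepsilon_0^*\widehat{G}\otimes\varepsilon_2^*\widehat{G}\bigr)\otimes(\varepsilon_1^*\widehat{G})^{\otimes -1} = \delta\widehat{G}, $$
and to check it is well defined, i.e.\ independent of the choice of lifts $\widehat{g_1},\widehat{g_2}$. Here I would use that any two lifts of $g_i$ differ by the action of some $u_i\in U(1)$, which is central in $\widehat{G}$; replacing $\widehat{g_i}$ by $\widehat{g_i}u_i$ multiplies $\widehat{g_1}\cdot\widehat{g_2}$ by $u_1u_2$ (centrality lets us collect the $U(1)$-factors), and by the definition of the tensor product and of the inverse bundle, the equivalence relation $(s,t)\sim(su,tu^{-1})$ absorbs exactly this ambiguity. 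So $s$ descends to a genuine global section; since it takes values in the $U(1)$-torsor fibers it is nowhere zero, and a principal $U(1)$-bundle admitting a global section is trivial.

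The only real obstacle is the bookkeeping of which $U(1)$-factor lands where under the identification $\delta\widehat{G} = \varepsilon_0^*\widehat{G}\otimes(\varepsilon_1^*\widehat{G})^{\otimes-1}\otimes\varepsilon_2^*\widehat{G}$: one must be careful that the inverse bundle contributes $u^{-1}$ where a direct factor contributes $u$, and that centrality of $U(1)\subset\widehat{G}$ is what makes $(\widehat{g_1}u_1)(\widehat{g_2}u_2) = (\widehat{g_1}\widehat{g_2})(u_1u_2)$. Once the signs and exponents are lined up, well-definedness is immediate and the lemma follows. (This is also the cocycle-type identity $\varepsilon_0 - \varepsilon_1 + \varepsilon_2 = 0$ on the level of the group $\widehat{G}$ before passing to $U(1)$, which is the conceptual reason the bundle trivializes.)
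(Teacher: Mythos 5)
Your proof is correct and is essentially the argument the paper relies on: the paper defers the proof of this lemma to \cite{Suz}, but the explicit global section it introduces later in Section 2.2, namely $\hat{s}_{nt}(g_1,g_2)=[((g_1,g_2),\hat{g}_2 ),((g_1,g_2),\hat{g}_1\hat{g}_2)^{\otimes -1},((g_1,g_2),\hat{g}_1)]$, is exactly your section $s$, and your well-definedness check (centrality of $U(1)$ making the ambiguity $u_2\cdot u_1\cdot(u_1u_2)^{-1}=1$ cancel under the relation $(s,t)\sim(su,tu^{-1})$) is the standard verification. No gaps.
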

\begin{proof}
See \cite{Suz}.
\end{proof}

\begin{remark}
$\delta(\delta \widehat{G})$ is
canonically isomorphic to $G \times G \times G \times U(1)$ because ${\varepsilon}_{i}{\varepsilon}_{j} ={\varepsilon}_{j-1}{\varepsilon}_{i}$ for $i<j$.
\end{remark}
For any connection $\theta$ on $\widehat{G}$, there is the induced connection $\delta \theta $ on $\delta \widehat{G} $ \cite[Brylinski]{Bry}.

\begin{proposition}
{Let} $c_1 (\theta )$ denote the 2-form on $G$ which hits $\left( \frac{-1}{2 \pi i} \right) d \theta \in \Omega ^2 (\widehat{G})$ by $\pi ^{*}$, and $\hat{s}$ a global section of
$\delta \widehat{G} $ such that $\delta \hat{s}:= {\varepsilon}_{0} ^* \hat{s} \otimes ({\varepsilon}_{1} ^* \hat{s})^{\otimes -1} \otimes {\varepsilon}_{2} ^* \hat{s} \otimes ({\varepsilon}_{3} ^* \hat{s})^{\otimes -1}=1$. Then the following equations hold.
$$ ({\varepsilon}_{0} ^* - {\varepsilon}_{1} ^* + {\varepsilon}_{2} ^* ) c_1 (\theta ) = \left( \frac{-1}{2 \pi i} \right)d(\hat{s}^{*} (\delta \theta )) \enspace \in \Omega ^2 (NG(2)) .$$ 
$$({\varepsilon}_{0} ^* - {\varepsilon}_{1} ^* + {\varepsilon}_{2} ^* - {\varepsilon}_{3} ^*)(\hat{s}^{*} (\delta \theta ))=0.$$
\end{proposition}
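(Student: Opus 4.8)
My plan is to reduce both identities to two standard facts about $U(1)$-bundles with connection. \emph{Fact 1:} if $\omega$ is a connection form on a $U(1)$-bundle $\rho\colon P\to B$ and $s\colon B\to P$ is a global section, then $d(s^*\omega)=s^*(d\omega)=F_\omega$, the curvature $2$-form of $(P,\omega)$ viewed on the base $B$ (here $d\omega$ is already basic because $U(1)$ is abelian, and $\rho\circ s=\mathrm{id}$). \emph{Fact 2:} the operations $S\mapsto f^*S$, $(S,T)\mapsto S\otimes T$, $S\mapsto S^{\otimes-1}$ on $U(1)$-bundles-with-connection are functorial, hence additive on curvatures ($F_{f^*\theta}=f^*F_\theta$, $F_{\theta_S\otimes\theta_T}=F_{\theta_S}+F_{\theta_T}$, $F_{\theta^{\otimes-1}}=-F_\theta$) and, on section pullbacks, $(f^*s)^*(f^*\omega)=f^*(s^*\omega)$, $(s\otimes t)^*(\theta_S\otimes\theta_T)=s^*\theta_S+t^*\theta_T$, $(s^{\otimes-1})^*(\theta^{\otimes-1})=-s^*\omega$. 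All of this is available from \cite{Bry}, where the induced connection $\delta\theta$ is constructed.

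For the first equation I would argue as follows. By Fact 2 the curvature of $(\delta\widehat G,\delta\theta)$, as a $2$-form on $NG(2)$, is $(\varepsilon_0^*-\varepsilon_1^*+\varepsilon_2^*)F_\theta$, where $F_\theta$ is the curvature of $(\widehat G,\theta)$ on $G$; since $\pi^*F_\theta=d\theta=-2\pi i\,\pi^*c_1(\theta)$ we get $F_\theta=-2\pi i\,c_1(\theta)$. On the other hand, Fact 1 applied to the global section $\hat s$ gives $d(\hat s^*(\delta\theta))=F_{\delta\theta}=-2\pi i\,(\varepsilon_0^*-\varepsilon_1^*+\varepsilon_2^*)c_1(\theta)$, which is the asserted identity after multiplying by $\tfrac{-1}{2\pi i}$.

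For the second equation I would apply $\delta$ once more and use Remark 2.2. By the functoriality of the induced-connection construction, the canonical isomorphism $\delta(\delta\widehat G)\cong NG(3)\times U(1)$ — which pairs, via the simplicial identities $\varepsilon_i\varepsilon_j=\varepsilon_{j-1}\varepsilon_i$, each pulled-back factor $Q=(\varepsilon_j\varepsilon_i)^*\widehat G$ (with its connection $(\varepsilon_j\varepsilon_i)^*\theta$) with an identical factor occurring with the opposite exponent — should identify $\delta(\delta\theta)$ with the trivial product connection, because $Q\otimes Q^{\otimes-1}$, equipped with the tensor of $\theta_Q$ and $\theta_Q^{\otimes-1}$, has connection form $0$ in its canonical evaluation trivialization. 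Under this identification the section $\delta\hat s$ is the constant function $1$, so $(\delta\hat s)^*(\delta(\delta\theta))=0$. Finally, by the section-level part of Fact 2,
$$(\delta\hat s)^*(\delta(\delta\theta))=\sum_{i=0}^{3}(-1)^i(\varepsilon_i^*\hat s)^*(\varepsilon_i^*(\delta\theta))=\sum_{i=0}^{3}(-1)^i\varepsilon_i^*(\hat s^*(\delta\theta))=(\varepsilon_0^*-\varepsilon_1^*+\varepsilon_2^*-\varepsilon_3^*)(\hat s^*(\delta\theta)),$$
which therefore vanishes.

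The only real work — and the place I expect the main obstacle — is the bookkeeping in the last paragraph: one must check that the canonical isomorphism of Remark 2.2 is an isomorphism of $U(1)$-bundles \emph{with connection}, i.e. that the cancellations of the factors $(\varepsilon_j\varepsilon_i)^*\widehat G$ forced by the face-operator identities are compatible with the pulled-back connections $(\varepsilon_j\varepsilon_i)^*\theta$ and with the tensor and dual operations. Granting the functoriality established in \cite{Bry}, this is a routine (if slightly tedious) verification with the simplicial identities, and everything else is standard $U(1)$-bundle theory.
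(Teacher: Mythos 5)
Your proposal is correct and follows the same route as the paper, which itself only cites \cite{Mur}, \cite{Mur2} and \cite{Suz} for this proposition: the first identity is the statement that the first Chern form of $(\delta\widehat{G},\delta\theta)$, computed via the global section $\hat{s}$, equals the alternating sum $(\varepsilon_0^*-\varepsilon_1^*+\varepsilon_2^*)c_1(\theta)$ by additivity of curvature under pullback, tensor product and dual; the second follows from the canonical flat triviality of $\delta(\delta\widehat{G})$ (Remark 2.2) together with the hypothesis $\delta\hat{s}=1$. The one point you flag as the remaining work --- that the canonical isomorphism of Remark 2.2 respects the induced connections --- is indeed the only verification needed, and it is exactly the bookkeeping carried out in the cited references.
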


\begin{proof}
See \cite{Mur}\cite{Mur2} or \cite{Suz}.
\end{proof}

\par

\rm{The propositions above give the cocycle} $c_1 (\theta ) - \left( \frac{-1}{2 \pi i} \right)\hat{s}^{*} (\delta \theta ) \in \Omega ^3 (NG)$ below.

$$
\begin{CD}
0 \\
@AA{-d}A \\
c_1 ( \theta ) \in {\Omega}^{2} (G )@>{{\varepsilon}_{0} ^* - {\varepsilon}_{1} ^* + {\varepsilon}_{2} ^* }>>{\Omega}^{2} (G \times G)\\
@.@AA{d}A\\
@.-\left( \frac{-1}{2 \pi i} \right) \hat{s}^{*} (\delta \theta ) \in {\Omega}^{1} (G \times G)@>{{\varepsilon}_{0} ^* - {\varepsilon}_{1} ^* + {\varepsilon}_{2} ^* - {\varepsilon}_{3} ^*}>> 0
\end{CD}
$$

\begin{lemma}
{The cohomology class} $[c_1 (\theta ) - \left( \frac{-1}{2 \pi i} \right)\hat{s}^{*} (\delta \theta )] \in H^3 (\Omega  (NG))$ does not depend on $\theta$.
\end{lemma}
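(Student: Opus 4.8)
The plan is to show that any two connections on $\widehat{G}$ yield cohomologous cocycles, by exhibiting an explicit primitive in the total complex $\Omega^*(NG)$. Let $\theta_0$ and $\theta_1$ be two connections on the $U(1)$-bundle $\widehat{G}\to G$. Since the space of connections is affine, $\alpha:=\theta_1-\theta_0$ descends to a genuine $1$-form on $G$: indeed $\alpha$ is $U(1)$-invariant and horizontal with respect to the bundle action, so there is a unique $\beta\in\Omega^1(G)$ with $\pi^*\beta=\alpha$. First I would record that $\left(\tfrac{-1}{2\pi i}\right)(d\theta_1-d\theta_0)=\left(\tfrac{-1}{2\pi i}\right)d\alpha$, hence $c_1(\theta_1)-c_1(\theta_0)=\left(\tfrac{-1}{2\pi i}\right)d\beta$ in $\Omega^2(G)$, which handles the top row.

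Next I would analyze the bottom row. The induced connection $\delta\theta$ on $\delta\widehat{G}$ is built functorially from $\theta$ via pullbacks and the tensor-product construction, so $\delta\theta_1-\delta\theta_0$ is the corresponding combination of the descended forms: writing $\delta\alpha$ for the $1$-form on the total space of $\delta\widehat G$, one gets $\delta\theta_1-\delta\theta_0 = (\varepsilon_0^*-\varepsilon_1^*+\varepsilon_2^*)(\text{pullback of }\alpha)$, and after applying $\hat s^*$ this becomes $(\varepsilon_0^*-\varepsilon_1^*+\varepsilon_2^*)\beta\in\Omega^1(G\times G)$ — here I use that $\hat s$ is a global section and that $\hat s^*$ intertwines the pullback along $\varepsilon_i$ on $NG(2)$ with the pullback along $\varepsilon_i$ on $NG(1)$ applied to $\beta$, up to the section-compatibility. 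Therefore
$$
\left(c_1(\theta_1)-\left(\tfrac{-1}{2\pi i}\right)\hat s^*(\delta\theta_1)\right)
-\left(c_1(\theta_0)-\left(\tfrac{-1}{2\pi i}\right)\hat s^*(\delta\theta_0)\right)
= \left(\tfrac{-1}{2\pi i}\right)\Bigl(d\beta-(\varepsilon_0^*-\varepsilon_1^*+\varepsilon_2^*)\beta\Bigr),
$$
which is exactly $(d'+d'')$ applied (up to the sign conventions in the Definition) to the element $\left(\tfrac{-1}{2\pi i}\right)\beta\in\Omega^1(NG(1))=\Omega^{1,1}(NG)$. Hence the difference of the two cocycles is a total coboundary, proving the claim.

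The step I expect to be the main obstacle is the bookkeeping in the middle: making precise that $\hat s^*(\delta\theta)$ transforms under the simplicial face maps the way the diagram demands, i.e. that replacing $\theta$ by $\theta+\pi^*\beta$ changes $\hat s^*(\delta\theta)$ by precisely $(\varepsilon_0^*-\varepsilon_1^*+\varepsilon_2^*)\beta$ and not by that plus some correction coming from the choice of section $\hat s$. This requires unwinding the tensor-product bundle construction of $\delta\widehat G$ and the induced connection $\delta\theta$, checking that the section-dependence cancels because $\delta\hat s=1$; once one is careful that the $U(1)$-valued transition data is untouched by adding a basic form, the identity $\delta(\theta+\pi^*\beta)=\delta\theta+\delta(\pi^*\beta)$ with $\delta(\pi^*\beta)$ the obvious alternating pullback follows, and pulling back by $\hat s$ gives the stated formula. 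The remaining verifications — that $d'$ and $d''$ of $\left(\tfrac{-1}{2\pi i}\right)\beta$ produce the two components above with the correct signs from the Definition, and that these are the only nonzero components — are routine sign chases and I would state them without grinding through the details.
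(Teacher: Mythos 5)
Your proof is correct: the difference $\theta_1-\theta_0$ of two connections is horizontal and $U(1)$-invariant, hence descends to a $1$-form $\beta$ on $G$, and the difference of the two cocycles is precisely the total differential of $-\left(\tfrac{-1}{2\pi i}\right)\beta\in\Omega^{1}(NG(1))$, the key point being (as you note) that $\delta(\theta+\pi^*\beta)-\delta\theta$ is the pullback of the basic form $({\varepsilon}_0^*-{\varepsilon}_1^*+{\varepsilon}_2^*)\beta$, so that $\hat{s}^*$ recovers it regardless of the choice of section since the same $\hat{s}$ is used for both connections. The paper itself defers the proof to \cite{Suz}, and the argument there is essentially this same affine-space-of-connections computation, so your route matches the intended one.
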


\begin{proof}
See \cite{Suz}.
\end{proof}
\begin{errata}
{In} \cite{Suz}, the author insisted that $\hat{s}$ can be any section, but we must take $\hat{s}$ as above. Besides, the author
apologize that there are some confusions of the signature on the diagram of the double complex in  \cite{Suz}. The statement of Theorem 4.2 in it also must be
modified to ``The transgression map $H^*(BG, *) \rightarrow H^*(EG,G) \rightarrow H^{*-1}(G)$ of the universal bundle  $EG \rightarrow BG$ maps the Dixmier-Douady class to $(-1)$ times of the first Chern class of $\widehat{G} \rightarrow G$." \hspace{30em} $ \Box $ 
\end{errata}

Now we consider what happens if we change the section $\hat{s}$. There is a natural section $\hat{s}_{nt}$ of $\delta \widehat{G} $
defined as:
$$\hat{s}_{nt}(g_1,g_2):=[((g_1,g_2),\hat{g}_2 ) ,  ((g_1,g_2),\hat{g} _1 \hat{g} _2  )^{\otimes -1},((g_1,g_2),\hat{g}_1 ) ].$$
Then any other section $\hat{s}$ such that ${\delta}\hat{s}=1$ can be represented by $\hat{s} = \hat{s}_{nt} \cdot {\varphi}$
where ${\varphi}$ is a $U(1)$-valued smooth function on $G \times G$ which satisfies $\delta {\varphi}=1$.
If we pull back $\delta \theta$ by $\hat{s}$, the equation $\hat{s}^*(\delta \theta) = \hat{s}^* _{nt}(\delta \theta) + d {\rm{log}} {\varphi}$
holds. If there exists a $U(1)$-valued smooth function ${\varphi}'$ on $G $ which satisfies $\delta {\varphi}'={\varphi}$, the
cohomology class $[- \left( \frac{-1}{2 \pi i} \right) d {\rm{log}} {\varphi}]$ is equal to $0$ in $H^3 (\Omega  (NG))$.
So we have the following proposition.
\begin{proposition}
{Up to the cohomology class in the smooth cohomology } $H^2(G, {U(1)})$, the cohomology class $[c_1 (\theta ) - \left( \frac{-1}{2 \pi i} \right)\hat{s}^{*} (\delta \theta )] \in H^3 (\Omega  (NG))$ is decided uniquely by the central $U(1)$-extension $\widehat{G} \rightarrow G$.\hspace{8em} $ \Box $
\end{proposition}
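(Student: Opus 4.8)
The plan is to track how the cocycle $c_1(\theta) - \left(\frac{-1}{2\pi i}\right)\hat{s}^*(\delta\theta)$ changes under the two independent choices involved in its construction: the connection $\theta$ on $\widehat{G}$, and the section $\hat{s}$ of $\delta\widehat{G}$ with $\delta\hat{s}=1$. Independence of $\theta$ is already established in Lemma 2.3, so the whole content reduces to understanding the dependence on $\hat{s}$, which the discussion preceding the proposition has essentially carried out. I would organize the proof around the identity $\hat{s}^*(\delta\theta) = \hat{s}_{nt}^*(\delta\theta) + d\log\varphi$ for the distinguished section $\hat{s}_{nt}$, where $\varphi$ is the $U(1)$-valued function on $G\times G$ with $\hat{s} = \hat{s}_{nt}\cdot\varphi$ and $\delta\varphi = 1$.

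First I would note that, having fixed $\theta$, replacing $\hat{s}$ by $\hat{s}_{nt}$ changes the degree-$(1,1)$ component of the cocycle by the exact-in-the-simplicial-direction term $-\left(\frac{-1}{2\pi i}\right)d\log\varphi$, while leaving the degree-$(2,0)$ component $c_1(\theta)$ untouched (since $c_1(\theta)$ does not involve $\hat{s}$). Because $\delta\varphi = 1$, the term $d\log\varphi$ is a $d'$-closed element of $\Omega^{1,1}(NG)$, so it represents a class in $H^3(\Omega(NG))$. The key step is to identify this class: $\varphi$ is a $U(1)$-valued $d'$-cocycle on $NG(2)$, hence defines a class in the smooth group cohomology $H^2(G,U(1))$, and the assignment $\varphi \mapsto [d\log\varphi]$ realizes the standard comparison map from $H^2(G,U(1))$ into $H^3(\Omega(NG))$ (via the exponential/de Rham comparison). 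In particular, if $\varphi = \delta\varphi'$ for a $U(1)$-valued function $\varphi'$ on $G$, then $[d\log\varphi] = [d'(d\log\varphi')] $ is a coboundary and the class vanishes — this is exactly the remark made before the proposition.

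Putting these together: the class $[c_1(\theta) - \left(\frac{-1}{2\pi i}\right)\hat{s}^*(\delta\theta)]$ is well-defined once we quotient $H^3(\Omega(NG))$ by the image of $H^2(G,U(1))$ under the above comparison map, and this quotient depends only on the bundle $\widehat{G}\to G$ together with the choice of $\theta$ — which by Lemma 2.3 is immaterial. Hence the class modulo $H^2(G,U(1))$ is an invariant of the central extension alone, which is the assertion. I would present this as: (i) fix $\theta$, vary $\hat{s}$, reduce to the $d\log\varphi$ term; (ii) recognize $\varphi$ as a smooth 2-cocycle and $[d\log\varphi]$ as lying in the image of the comparison $H^2(G,U(1))\to H^3(\Omega(NG))$; (iii) invoke Lemma 2.3 to dispose of the $\theta$-dependence; (iv) conclude.

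The main obstacle I anticipate is step (ii): making precise that every ambiguity coming from the choice of $\hat{s}$ is captured by a genuine class in $H^2(G,U(1))$ and not something larger. One must check that an arbitrary $\varphi$ with $\delta\varphi=1$ is a normalized (or normalizable) smooth cocycle, that the map $[\varphi]\mapsto [d\log\varphi]$ is well-defined on cohomology (independent of the representative $\varphi$ within its smooth-cohomology class, which is precisely the $\delta\varphi'$ computation), and that replacing $\hat{s}_{nt}$ by another fixed reference section only shifts things by such a class. None of this is deep, but it is the part where the bookkeeping between the three groups $H^3(\Omega(NG))$, $H^2(G,U(1))$, and the set of admissible sections $\hat{s}$ has to be done carefully; everything else is either a direct pullback computation or a citation of the preceding lemmas.
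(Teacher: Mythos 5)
Your argument follows essentially the same route as the paper: the proposition there is justified precisely by the discussion you cite, namely writing $\hat{s} = \hat{s}_{nt}\cdot\varphi$ with $\delta\varphi = 1$, using $\hat{s}^{*}(\delta\theta) = \hat{s}_{nt}^{*}(\delta\theta) + d\log\varphi$, observing that the resulting class vanishes when $\varphi = \delta\varphi'$, and invoking Lemma 2.3 to dispose of the dependence on $\theta$. The additional care you propose in step (ii) concerning the comparison map $H^2(G,U(1))\to H^3(\Omega(NG))$ goes a little beyond what the paper explicitly writes, but it is a refinement of the same argument rather than a different one.
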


\subsection{Dixmier-Douady class}

We recall the definition of the Dixmier-Douady class, following \cite{Car}.
Let $\phi : Y \rightarrow M $ be a principal $G$-bundle  and  $\{ U_{\alpha } \}$ a Leray covering of $M$.
When $G$ has a central $U(1)$-extension $\pi : \widehat{G} \rightarrow G $,
  the transition functions $ g _{\alpha \beta } : U_{\alpha \beta } \rightarrow G$ lift to $ \widehat{G} $. i.e.
there exist continuous maps $\hat{g} _{\alpha \beta } : U_{\alpha \beta } \rightarrow \widehat{G} $ such that
$\pi \circ \hat{g}_{\alpha \beta } = g _{\alpha \beta }$. This is because each $U_{\alpha \beta }$ is contractible so
the pull-back of $\pi $ by $ g _{\alpha \beta }$ has a global section.
Now the $U(1)$-valued functions $c_{\alpha \beta \gamma}$ on $U_{\alpha \beta \gamma}$ are defined
as $(\hat{g} _{ \beta \gamma }  (\hat{g} _{\alpha \beta   }  \hat{g}_{\beta \gamma})^{-1} \hat{g} _{\alpha \beta }) \cdot c_{\alpha \beta \gamma}:= \hat{g} _{ \beta \gamma }  \hat{g}^{-1} _{\alpha \gamma   } \hat{g} _{\alpha \beta } \in g_{ \beta \gamma } ^*  \widehat{G} \otimes ( g_{ \alpha \gamma } ^*  \widehat{G})^{\otimes -1}
\otimes g_{\alpha \beta  } ^*  \widehat{G}$. 
Then it is easily seen that $\{ c_{\alpha \beta \gamma} \}$ is a $U(1)$-valued \v{C}ech-cocycle on $M$, hence 
it defines a cohomology class in $H^2 (M , \underline{U(1)} ) \cong H^3 (M , \mathbb{Z} )$.
 This class is called the Dixmier-Douady class of $Y$. 
\begin{remark}
Let $s_{\alpha \beta \gamma}$ be a section of $\widehat{G}_{\alpha \beta \gamma} := g_{ \beta \gamma } ^*  \widehat{G} \otimes ( g_{ \alpha \gamma } ^*  \widehat{G})^{\otimes -1}
\otimes g_{\alpha \beta  } ^*  \widehat{G}$ such that $\delta s_{\alpha \beta \gamma} := s_{ \beta \gamma \delta} \otimes s_{\alpha \gamma \delta} ^{\otimes -1} \otimes
s_{\alpha \beta \delta} \otimes s_{\alpha \beta \gamma} ^{\otimes -1} = 1$. This condition makes sense since  $\widehat{G}_{ \beta \gamma \delta} \otimes \widehat{G}_{\alpha \gamma \delta} ^{\otimes -1} \otimes
\widehat{G}_{\alpha \beta \delta} \otimes \widehat{G}_{\alpha \beta \gamma} ^{\otimes -1}$ is canonically trivial.
Then we can define a $U(1)$-valued \v{C}ech-cocycle $c_{\alpha \beta \gamma} ^s$ on $M$ by the equation $s_{\alpha \beta \gamma} \cdot c_{\alpha \beta \gamma} ^s =\hat{g} _{ \beta \gamma }  \hat{g}^{-1} _{\alpha \gamma   } \hat{g} _{\alpha \beta }$. The cohomology class $[c_{\alpha \beta \gamma} ^s] \in H^2 (M , \underline{U(1)} ) \cong H^3 (M , \mathbb{Z} )$ can be also called the Dixmier-Douady class of $Y$.

\end{remark}

We fix any section $\hat{s}$ of $\delta \widehat{G} $ which satisfies $\delta s=1$. Since $ g_{ \beta \gamma } ^*  \widehat{G} \otimes ( g_{ \alpha \gamma } ^*  \widehat{G})^{\otimes -1} \otimes g_{\alpha \beta  } ^*  \widehat{G}$ is the pull-back of $\delta \widehat{G}$
by $ ( g_{\alpha \beta  } , g_{ \beta \gamma } ) : U_{\alpha \beta \gamma} \rightarrow G \times G $,  there is the induced section of $ g_{ \beta \gamma } ^*  \widehat{G} \otimes ( g_{ \alpha \gamma } ^*  \widehat{G})^{\otimes -1}
\otimes g_{\alpha \beta  } ^*  \widehat{G} $. So we can define the DD class
by using this section. \\

In \cite{Suz}, the theorem below was shown.

\begin{theorem}
{The cohomology class} $[c_1 (\theta ) - \left( \frac{-1}{2 \pi i} \right)\hat{s}^{*} (\delta \theta )] \in H^3 (\Omega  (NG))$
represents the universal Dixmier-Douady class associated to $\pi$ and a section $\hat{s}$.\hspace{30em} $ \Box $

\end{theorem}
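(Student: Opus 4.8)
The plan is to compute the image of $[c_1(\theta)-\left(\frac{-1}{2\pi i}\right)\hat{s}^{*}(\delta\theta)]$ under the isomorphism $H^{3}(\Omega(NG))\cong H^{3}(BG)$ of Theorem~2.1 and to match it, after pulling back along the classifying map of an arbitrary principal $G$-bundle $\phi:Y\rightarrow M$, with the \v{C}ech class $[c_{\alpha\beta\gamma}^{s}]\in H^{2}(M,\underline{U(1)})\cong H^{3}(M,\mathbb{Z})$ of the Remark. Fix a Leray cover $\{U_{\alpha}\}$ of $M$ with transition functions $g_{\alpha\beta}:U_{\alpha\beta}\rightarrow G$ and continuous lifts $\hat{g}_{\alpha\beta}:U_{\alpha\beta}\rightarrow\widehat{G}$. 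The nerve $N\mathcal{U}$, with $N\mathcal{U}(p)=\coprod_{\alpha_0,\dots,\alpha_p}U_{\alpha_0\cdots\alpha_p}$, is a simplicial manifold with $\|N\mathcal{U}\|\simeq M$, and $p\mapsto(g_{\alpha_0\alpha_1},\dots,g_{\alpha_{p-1}\alpha_p})$ is a simplicial map $N\mathcal{U}\rightarrow NG$ realising the classifying map of $\phi$ (this uses the cocycle identity $g_{\alpha\beta}g_{\beta\gamma}=g_{\alpha\gamma}$ to be compatible with the inner face operators). It induces a morphism of double complexes $\Omega^{*}(NG)\rightarrow\Omega^{*}(N\mathcal{U})$, and the total complex of $\Omega^{*}(N\mathcal{U})$ is the \v{C}ech--de Rham bicomplex of $(M,\{U_{\alpha}\})$, which computes $H^{*}(M)$. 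Thus the statement reduces to a descent (``staircase'') computation there.

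Under this morphism the component $c_1(\theta)\in\Omega^{2}(NG(1))$ maps to the \v{C}ech-$1$/de-Rham-$2$ cochain $\{g_{\alpha\beta}^{*}c_1(\theta)\}$, and $-\left(\frac{-1}{2\pi i}\right)\hat{s}^{*}(\delta\theta)\in\Omega^{1}(NG(2))$ to the \v{C}ech-$2$/de-Rham-$1$ cochain $\{-\left(\frac{-1}{2\pi i}\right)(g_{\alpha\beta},g_{\beta\gamma})^{*}\hat{s}^{*}(\delta\theta)\}$. Since $\pi^{*}c_1(\theta)=\left(\frac{-1}{2\pi i}\right)d\theta$, the local $1$-form $\lambda_{\alpha\beta}:=\left(\frac{-1}{2\pi i}\right)\hat{g}_{\alpha\beta}^{*}\theta$ satisfies $d\lambda_{\alpha\beta}=g_{\alpha\beta}^{*}c_1(\theta)$, so subtracting the total differential of $\{\lambda_{\alpha\beta}\}$ kills the de-Rham-degree-$2$ part and, in de-Rham degree $1$, leaves $(\delta\lambda)_{\alpha\beta\gamma}=\lambda_{\beta\gamma}-\lambda_{\alpha\gamma}+\lambda_{\alpha\beta}$ plus the original term (up to sign). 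Pulling back the relation $(\varepsilon_0^{*}-\varepsilon_1^{*}+\varepsilon_2^{*})c_1(\theta)=\left(\frac{-1}{2\pi i}\right)d(\hat{s}^{*}(\delta\theta))$ along $(g_{\alpha\beta},g_{\beta\gamma})$, with $\varepsilon_1\circ(g_{\alpha\beta},g_{\beta\gamma})=g_{\alpha\gamma}$, shows that $(\delta\lambda)_{\alpha\beta\gamma}-\left(\frac{-1}{2\pi i}\right)(g_{\alpha\beta},g_{\beta\gamma})^{*}\hat{s}^{*}(\delta\theta)$ is a closed $1$-form on the contractible $U_{\alpha\beta\gamma}$. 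An explicit computation with the natural section $\hat{s}_{nt}$, using that the induced connection on the tensor-product bundle is $\varepsilon_0^{*}\theta-\varepsilon_1^{*}\theta+\varepsilon_2^{*}\theta$, that $\hat{s}=\hat{s}_{nt}\varphi$, and the defining relation $s_{\alpha\beta\gamma}\cdot c_{\alpha\beta\gamma}^{s}=\hat{g}_{\beta\gamma}\hat{g}_{\alpha\gamma}^{-1}\hat{g}_{\alpha\beta}$, identifies this $1$-form with $\pm\left(\frac{-1}{2\pi i}\right)d\log c_{\alpha\beta\gamma}^{s}$. One further staircase step — subtracting the total differential of the \v{C}ech-$2$/de-Rham-$0$ cochain $\left\{\frac{1}{2\pi i}\log c_{\alpha\beta\gamma}^{s}\right\}$ — produces the cohomologous, purely \v{C}ech representative $\delta\left\{\frac{1}{2\pi i}\log c_{\alpha\beta\gamma}^{s}\right\}$ in \v{C}ech degree $3$, which is integer-valued because $\delta c^{s}=1$ as a $U(1)$-cochain.

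It remains to identify this endpoint with the Dixmier--Douady class. By construction $\delta\left\{\frac{1}{2\pi i}\log c_{\alpha\beta\gamma}^{s}\right\}$ is precisely the image of $[c_{\alpha\beta\gamma}^{s}]\in H^{2}(M,\underline{U(1)})$ under the Bockstein isomorphism $H^{2}(M,\underline{U(1)})\cong H^{3}(M,\mathbb{Z})$ attached to $0\rightarrow\mathbb{Z}\rightarrow\mathbb{R}\rightarrow U(1)\rightarrow 0$, i.e. the DD class of $Y$ defined via the section $\hat{s}$ (the Remark). Since the chain of identities above holds for an arbitrary $G$-bundle $\phi:Y\rightarrow M$, and a degree-$3$ characteristic class of $G$-bundles is determined by its values on all such $Y$ (universality of $BG$), while $[c_1(\theta)-\left(\frac{-1}{2\pi i}\right)\hat{s}^{*}(\delta\theta)]$ is a single fixed class pulled back along the classifying map, we conclude that $[c_1(\theta)-\left(\frac{-1}{2\pi i}\right)\hat{s}^{*}(\delta\theta)]\in H^{3}(\Omega(NG))\cong H^{3}(BG)$ is the universal Dixmier--Douady class associated to $\pi$ and $\hat{s}$.

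The step I expect to be the main obstacle is the sign and normalisation bookkeeping — exactly what the Errata in this section flags. One must fix compatible sign conventions for $d'$ and $d''$, for the induced connection $\delta\theta$ on the tensor-product bundle, and for each staircase step, and verify that the nerve map $N\mathcal{U}\rightarrow NG$ is genuinely simplicial (compatible with all face operators $\varepsilon_i$), so that it really induces a morphism of double complexes. A secondary technical point is that $G$ is infinite-dimensional (a loop group, or $U_{res}(H)$), so one needs the standing hypotheses that $G$ admits partitions of unity and satisfies the de Rham theorem — both to justify Theorem~2.1 and to produce the connection $\theta$ and the local primitives $\lambda_{\alpha\beta}$; and one must keep the chosen section $\hat{s}$ (equivalently the function $\varphi$ with $\hat{s}=\hat{s}_{nt}\varphi$) aligned with the induced sections $s_{\alpha\beta\gamma}$ throughout, so that the terminal \v{C}ech cocycle is $c_{\alpha\beta\gamma}^{s}$ on the nose and not merely up to a class in $H^{2}(G,U(1))$.
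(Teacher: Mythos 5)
Your proposal is correct and follows essentially the same route as the paper's (deferred) proof: the paper only quotes this theorem from \cite{Suz}, and the argument there is precisely your C\v{e}ch--de Rham staircase --- pull the simplicial cocycle back along the nerve map $N\mathcal{U}\rightarrow NG$, use the local primitives $\left(\frac{-1}{2\pi i}\right)\hat{g}_{\alpha\beta}^{*}\theta$, and land on the Bockstein image of $[c^{s}_{\alpha\beta\gamma}]$, which is exactly the data Section 2.3 sets up (the induced section of $(g_{\alpha\beta},g_{\beta\gamma})^{*}\delta\widehat{G}$ and the cocycle $c^{s}_{\alpha\beta\gamma}$). The only loose end is the sign bookkeeping you already flag, which is the same issue the Errata in this section corrects.
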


\section{Another description of the DD class}

There is a simplicial manifold $N\widehat{G}$ and face operators $ \hat{\varepsilon}_i $
of it. Using this, Behrend and Xu described the cocycle which represents the DD class in another way.

\begin{proposition}[\cite{Beh}\cite{Beh2}]
{Let} $\widehat{G} \times \widehat{G} \rightarrow G \times G$ be a $(U(1) \times U(1))$-bundle.
Then the $1$-form $(\hat{{\varepsilon}}_{0} ^* - \hat{{\varepsilon}}_{1} ^* + \hat{{\varepsilon}}_{2} ^* )\theta$ on $\widehat{G} \times \widehat{G}$ is horizontal and $(U(1) \times U(1))$-invariant, hence  there exists the $1$-form $\chi$ on
${G} \times {G}$
 which satisfies
$(\pi \times \pi) ^* \chi = (\hat{{\varepsilon}}_{0} ^* - \hat{{\varepsilon}}_{1} ^* + \hat{{\varepsilon}}_{2} ^* )\theta$.

\end{proposition}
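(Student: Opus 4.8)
The plan is to prove that the scalar $1$-form
$\alpha := (\hat{\varepsilon}_{0}^{*} - \hat{\varepsilon}_{1}^{*} + \hat{\varepsilon}_{2}^{*})\theta$ on $\widehat{G}\times\widehat{G}$ is \emph{basic} for the principal $(U(1)\times U(1))$-bundle $\pi\times\pi:\widehat{G}\times\widehat{G}\to G\times G$, i.e.\ that it is simultaneously horizontal and $(U(1)\times U(1))$-invariant. Once this is established, the standard descent criterion for differential forms on a principal bundle---valid for scalar-valued forms because the structure group is abelian, so the adjoint action is trivial and invariance alone suffices---produces a unique $\chi\in\Omega^{1}(G\times G)$ with $(\pi\times\pi)^{*}\chi=\alpha$; uniqueness is automatic since $\pi\times\pi$ is a surjective submersion. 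Throughout I identify $\mathrm{Lie}(U(1))$ with $\mathbb{R}$ in the usual way, so that $\theta$ is characterised by the two connection axioms $R_{u}^{*}\theta=\theta$ for $u\in U(1)$ and $\theta(\xi^{*})=\xi$ on the fundamental vector field $\xi^{*}$.

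For invariance I would first record how the face operators intertwine the right action $R_{(u_{1},u_{2})}(\hat{g}_{1},\hat{g}_{2})=(\hat{g}_{1}u_{1},\hat{g}_{2}u_{2})$ with the $U(1)$-action on $\widehat{G}$: since $\hat{\varepsilon}_{0}$ and $\hat{\varepsilon}_{2}$ are the two projections and $\hat{\varepsilon}_{1}$ is the group multiplication, one has $\hat{\varepsilon}_{0}\circ R_{(u_{1},u_{2})}=R_{u_{2}}\circ\hat{\varepsilon}_{0}$, $\hat{\varepsilon}_{2}\circ R_{(u_{1},u_{2})}=R_{u_{1}}\circ\hat{\varepsilon}_{2}$, and $\hat{\varepsilon}_{1}\circ R_{(u_{1},u_{2})}=R_{u_{1}u_{2}}\circ\hat{\varepsilon}_{1}$, the last identity using that $U(1)$ is central in $\widehat{G}$. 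Composing with pullback and invoking $R_{u}^{*}\theta=\theta$, each of $\hat{\varepsilon}_{0}^{*}\theta,\hat{\varepsilon}_{1}^{*}\theta,\hat{\varepsilon}_{2}^{*}\theta$ is already $(U(1)\times U(1))$-invariant, hence so is their alternating sum $\alpha$.

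For horizontality it is enough to evaluate $\alpha$ on the generators of the vertical distribution, namely the fundamental vector fields $(\xi_{1}^{*},0)$ and $(0,\xi_{2}^{*})$ at a point $(\hat{g}_{1},\hat{g}_{2})$, with $\xi_{1},\xi_{2}\in\mathrm{Lie}(U(1))$. Differentiating the face operators one finds $(\hat{\varepsilon}_{0})_{*}(\xi_{1}^{*},\xi_{2}^{*})=\xi_{2}^{*}$ at $\hat{g}_{2}$ and $(\hat{\varepsilon}_{2})_{*}(\xi_{1}^{*},\xi_{2}^{*})=\xi_{1}^{*}$ at $\hat{g}_{1}$, while---the one point that genuinely requires care---$(\hat{\varepsilon}_{1})_{*}(\xi_{1}^{*},\xi_{2}^{*})=(\xi_{1}+\xi_{2})^{*}$ at $\hat{g}_{1}\hat{g}_{2}$, because $\frac{d}{dt}\big|_{0}(\hat{g}_{1}e^{t\xi_{1}})\hat{g}_{2}=\frac{d}{dt}\big|_{0}\hat{g}_{1}\hat{g}_{2}e^{t\xi_{1}}$ by centrality of $U(1)$, and $\frac{d}{dt}\big|_{0}\hat{g}_{1}(\hat{g}_{2}e^{t\xi_{2}})=\frac{d}{dt}\big|_{0}\hat{g}_{1}\hat{g}_{2}e^{t\xi_{2}}$. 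Applying $\theta$ and using $\theta(\xi^{*})=\xi$ then yields $\alpha(\xi_{1}^{*},\xi_{2}^{*})=\xi_{2}-(\xi_{1}+\xi_{2})+\xi_{1}=0$, so $\alpha$ is horizontal.

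Combining the last two paragraphs, $\alpha$ is basic, and descends to the $1$-form $\chi$ with $(\pi\times\pi)^{*}\chi=(\hat{\varepsilon}_{0}^{*}-\hat{\varepsilon}_{1}^{*}+\hat{\varepsilon}_{2}^{*})\theta$ claimed in the Proposition. I expect the only non-formal step to be the computation of $(\hat{\varepsilon}_{1})_{*}$ on a vertical vector: centrality of the extension $U(1)$ is precisely what makes the two fibrewise contributions add, so that the alternating sum of the three values of $\theta$ cancels; everything else is bookkeeping with the two connection axioms and with the relations $\hat{\varepsilon}_{i}$ satisfy with respect to the group structure.
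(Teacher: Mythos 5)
Your proof is correct. Note that the paper itself does not prove this proposition at all---it simply refers the reader to Ginot--Sti\'{e}non---so your argument is a self-contained verification rather than a variant of the paper's route. The two essential points are exactly where you place them: the intertwining relations $\hat{\varepsilon}_{0}\circ R_{(u_{1},u_{2})}=R_{u_{2}}\circ\hat{\varepsilon}_{0}$, $\hat{\varepsilon}_{2}\circ R_{(u_{1},u_{2})}=R_{u_{1}}\circ\hat{\varepsilon}_{2}$, $\hat{\varepsilon}_{1}\circ R_{(u_{1},u_{2})}=R_{u_{1}u_{2}}\circ\hat{\varepsilon}_{1}$ (giving invariance, since for the abelian structure group $R_{u}^{*}\theta=\theta$), and the computation $(\hat{\varepsilon}_{1})_{*}(\xi_{1}^{*},\xi_{2}^{*})=(\xi_{1}+\xi_{2})^{*}$, both of which use centrality of the $U(1)$ in $\widehat{G}$; the alternating sum $\xi_{2}-(\xi_{1}+\xi_{2})+\xi_{1}=0$ then gives horizontality, and descent of a basic scalar-valued form along the surjective submersion $\pi\times\pi$ is standard. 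This matches the proof one finds in the cited reference; if anything, you might add one sentence observing that the descended form $\chi$ is smooth because it can be written locally as the pullback of $\alpha$ by a local section of $\pi\times\pi$, but this is routine.
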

\begin{proof}
For example, see \cite[G.Ginot, M.Sti\'{e}non]{Gin}.
\end{proof}
\par

Behrend and Xu proved the theorem below in \cite{Beh2}.
\begin{theorem}[\cite{Beh}\cite{Beh2}]
{The cohomology class} $[c_1 (\theta ) - \left( \frac{-1}{2 \pi i} \right)\chi] \in H^3 (\Omega  (NG))$ 
represents the universal Dixmier-Douady class.
\end{theorem}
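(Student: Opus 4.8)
The plan is to deduce Theorem~5.1 from Theorem~2.2 by identifying Behrend and Xu's $1$-form $\chi$ with $\hat{s}^{*}(\delta\theta)$ for the natural section $\hat{s}=\hat{s}_{nt}$ of $\delta\widehat{G}$ introduced in Section~2.2. Once the identity $\chi=\hat{s}_{nt}^{*}(\delta\theta)$ is established on $G\times G$, the two cocycles $c_1(\theta)-\left(\frac{-1}{2\pi i}\right)\chi$ and $c_1(\theta)-\left(\frac{-1}{2\pi i}\right)\hat{s}_{nt}^{*}(\delta\theta)$ coincide as elements of $\Omega^{3}(NG)$, and Theorem~2.2 applied to the choice $\hat{s}=\hat{s}_{nt}$ gives the conclusion. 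So first I would record that $\hat{s}_{nt}$ satisfies $\delta\hat{s}_{nt}=1$ --- this is exactly what makes it the base point of the affine family of sections discussed after the Errata, and it telescopes by associativity of the product in $\widehat{G}$ as in Remark~2.4 --- so that Theorem~2.2 indeed applies with this choice.

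The main work is the equality $\chi=\hat{s}_{nt}^{*}(\delta\theta)$. I would introduce the bundle morphism
$$q:\widehat{G}\times\widehat{G}\longrightarrow\delta\widehat{G},\qquad q(\hat{g}_1,\hat{g}_2):=\hat{g}_2\otimes(\hat{g}_1\hat{g}_2)^{\otimes-1}\otimes\hat{g}_1,$$
which covers the identity of $G\times G$. Because $U(1)$ is central in $\widehat{G}$, $q$ is invariant under the $(U(1)\times U(1))$-action on $\widehat{G}\times\widehat{G}$ --- the same cancellation used in Section~2.3 to define $c_{\alpha\beta\gamma}$ --- hence $q$ descends through $\pi\times\pi$, and comparing with the defining formula of $\hat{s}_{nt}$ one gets $q=\hat{s}_{nt}\circ(\pi\times\pi)$. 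Next I would unwind Brylinski's induced connection $\delta\theta$ on $\delta\widehat{G}=\varepsilon_0^{*}\widehat{G}\otimes(\varepsilon_1^{*}\widehat{G})^{\otimes-1}\otimes\varepsilon_2^{*}\widehat{G}$: the connection $\varepsilon_i^{*}\theta$ on the factor $\varepsilon_i^{*}\widehat{G}$ is the pullback of $\theta$ along the canonical map $\varepsilon_i^{*}\widehat{G}\to\widehat{G}$, so its contribution to $q^{*}(\delta\theta)$ is $\theta$ pulled back along the composite of $q$ with the projection to that factor. Since those three composites send $(\hat{g}_1,\hat{g}_2)$ to $\hat{g}_2$, $\hat{g}_1\hat{g}_2$ and $\hat{g}_1$, which are precisely the face operators $\hat{\varepsilon}_0,\hat{\varepsilon}_1,\hat{\varepsilon}_2$ of $N\widehat{G}$, and the middle one enters with a minus sign from the $\otimes-1$, I obtain
$$q^{*}(\delta\theta)=(\hat{\varepsilon}_0^{*}-\hat{\varepsilon}_1^{*}+\hat{\varepsilon}_2^{*})\theta=(\pi\times\pi)^{*}\chi,$$
the last equality being the definition of $\chi$ in Proposition~5.1. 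Combining with $q=\hat{s}_{nt}\circ(\pi\times\pi)$ gives $(\pi\times\pi)^{*}\hat{s}_{nt}^{*}(\delta\theta)=(\pi\times\pi)^{*}\chi$, and cancelling the surjective submersion $\pi\times\pi$ yields $\hat{s}_{nt}^{*}(\delta\theta)=\chi$.

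This finishes the argument: $c_1(\theta)-\left(\frac{-1}{2\pi i}\right)\chi=c_1(\theta)-\left(\frac{-1}{2\pi i}\right)\hat{s}_{nt}^{*}(\delta\theta)$ represents the universal Dixmier-Douady class by Theorem~2.2, while Proposition~2.1 simultaneously certifies that this form is a genuine cocycle in the total complex $\Omega^{3}(NG)$. I expect the only delicate point to be the identification $q^{*}(\delta\theta)=(\hat{\varepsilon}_0^{*}-\hat{\varepsilon}_1^{*}+\hat{\varepsilon}_2^{*})\theta$: one must keep the sign contributed by the $\otimes-1$ factor straight and handle the factor involving the group multiplication $\hat{g}_1\hat{g}_2$, using that $\widehat{G}\times\widehat{G}\to\widehat{G}$ is equivariant for the homomorphism $U(1)\times U(1)\to U(1)$, $(u_1,u_2)\mapsto u_1u_2$, so that $\theta$ transforms additively along it. Verifying invariance of $q$ and $\delta\hat{s}_{nt}=1$ is routine by comparison with Section~2.3 and Remark~2.4.
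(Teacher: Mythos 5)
Your argument is correct, but be aware that the paper does not actually prove this theorem: it is quoted from Behrend--Xu, and the paper's own contribution in Section 5 is the \emph{subsequent} theorem, namely the identity $(\pi\times\pi)^*\hat{s}^*_{nt}(\delta\theta)=(\hat{\varepsilon}_0^*-\hat{\varepsilon}_1^*+\hat{\varepsilon}_2^*)\theta$, which reconciles the Behrend--Xu cocycle with the one from Section 2.2. You have reorganized the logic: you prove that reconciliation first and then \emph{derive} the Behrend--Xu statement from Theorem 2.2. The mathematical core of your argument --- that $q=\hat{\varepsilon}_0\otimes\hat{\varepsilon}_1^{\otimes-1}\otimes\hat{\varepsilon}_2$ is $(U(1)\times U(1))$-invariant, descends through $\pi\times\pi$ to $\hat{s}_{nt}$, and pulls $\delta\theta$ back to $(\hat{\varepsilon}_0^*-\hat{\varepsilon}_1^*+\hat{\varepsilon}_2^*)\theta$ --- is exactly the content of the paper's proof of that later theorem, except that the paper verifies the connection identity in local trivializations (local sections $\eta_\lambda$, transition functions $\varphi_\lambda$, and the comparison functions $\tau_{\lambda\lambda'\lambda''}$), whereas you argue globally from the definition of the induced connection on a tensor product of $U(1)$-bundles. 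Your route is cleaner but leans on an unstated step: there is no genuine projection from $\delta\widehat{G}$ to its tensor factors, so the computation of $q^*(\delta\theta)$ really goes through the lift $(\hat{g}_1,\hat{g}_2)\mapsto(\hat{g}_2,\hat{g}_1\hat{g}_2,\hat{g}_1)$ into the fibre product, along which $\delta\theta$ pulls back to $\hat{\varepsilon}_0^*\theta-\hat{\varepsilon}_1^*\theta+\hat{\varepsilon}_2^*\theta$; the paper's local computation is precisely a verification of this. Two points worth making explicit in your write-up: (i) $\pi\times\pi$ is a surjective submersion, so pullback along it is injective on differential forms and the final cancellation is legitimate; (ii) Theorem 2.2 produces the Dixmier--Douady class ``associated to $\pi$ and a section $\hat{s}$,'' which for a general $\hat{s}$ is only well defined up to the $H^2(G,U(1))$-ambiguity of Proposition 2.2, so you must check that $\hat{s}_{nt}$ reproduces the standard \v{C}ech cocycle $\hat{g}_{\beta\gamma}\hat{g}_{\alpha\gamma}^{-1}\hat{g}_{\alpha\beta}$ of Section 2.3 --- which it does, since the induced section of $g_{\beta\gamma}^*\widehat{G}\otimes(g_{\alpha\gamma}^*\widehat{G})^{\otimes-1}\otimes g_{\alpha\beta}^*\widehat{G}$ is $\hat{g}_{\beta\gamma}\otimes(\hat{g}_{\alpha\beta}\hat{g}_{\beta\gamma})^{\otimes-1}\otimes\hat{g}_{\alpha\beta}$ --- rather than a twist of it.
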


Now we show our cocycle in section 2.2 satisfies the required condition in Proposition 3.1 
when we choose a natural section $s_{nt}:G \times G \rightarrow \delta \widehat{G}$.

\begin{theorem}
{The equation} $(\pi \times \pi)^* s^* _{nt}(\delta \theta) = (\hat{{\varepsilon}}_{0} ^* - \hat{{\varepsilon}}_{1} ^* + \hat{{\varepsilon}}_{2} ^* )\theta$ holds.
\end{theorem}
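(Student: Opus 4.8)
The plan is to unwind both sides of the claimed identity over a point $(g_1,g_2)\in G\times G$ and compare them fiberwise, using the explicit description of the natural section $\hat s_{nt}$ and the way $\delta\theta$ was defined on $\delta\widehat G$. Recall that $\delta\widehat G = {\varepsilon_0}^*\widehat G\otimes({\varepsilon_1}^*\widehat G)^{\otimes -1}\otimes{\varepsilon_2}^*\widehat G$, that $\hat s_{nt}(g_1,g_2)=[(\hat g_2)\otimes(\widehat{g_1g_2})^{\otimes -1}\otimes(\hat g_1)]$ in an evident shorthand, and that the connection $\delta\theta$ is the one induced from $\theta$ on the three tensor factors; concretely, for a section written as a tensor of three local sections $\hat a\otimes\hat b^{\otimes -1}\otimes\hat c$ of the pulled-back bundles, $(\hat a\otimes\hat b^{\otimes -1}\otimes\hat c)^*(\delta\theta)=\hat a^*\theta-\hat b^*\theta+\hat c^*\theta$. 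So first I would write $s^*_{nt}(\delta\theta)$ at $(g_1,g_2)$ in terms of the pullbacks of $\theta$ along the three partially-defined maps $(g_1,g_2)\mapsto\hat g_2$, $(g_1,g_2)\mapsto\widehat{g_1g_2}$, $(g_1,g_2)\mapsto\hat g_1$; but note these are only locally defined (a local lift of $g\mapsto\hat g$ is not canonical), and it is really $\hat s_{nt}$ as a whole that is globally defined.

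The cleaner route, and the one I would actually carry out, is to pull everything back to $\widehat G\times\widehat G$ first and avoid choosing local lifts. On $\widehat G\times\widehat G$ there is a \emph{tautological} section of $(\pi\times\pi)^*\delta\widehat G$: over $(\hat g_1,\hat g_2)$ one has the canonical element $\hat g_2\otimes(\widehat{g_1g_2})^{\otimes -1}\otimes\hat g_1$, where $\widehat{g_1g_2}:=m_{\widehat G}(\hat g_1,\hat g_2)$ uses the group multiplication of $\widehat G$ — this is exactly how $\hat s_{nt}$ was built, so $(\pi\times\pi)^*\hat s_{nt}$ equals this tautological section. Hence $(\pi\times\pi)^*s^*_{nt}(\delta\theta)$ is the pullback of $\delta\theta$ along the tautological section, which by the induced-connection formula is $\mathrm{pr}_2^*\theta-m_{\widehat G}^*\theta+\mathrm{pr}_1^*\theta$ evaluated on $\widehat G\times\widehat G$, where $\mathrm{pr}_i:\widehat G\times\widehat G\to\widehat G$ are the projections and $m_{\widehat G}:\widehat G\times\widehat G\to\widehat G$ is multiplication. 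On the other side, the simplicial manifold $N\widehat G$ has $N\widehat G(2)=\widehat G\times\widehat G$ with face maps $\hat\varepsilon_0(\hat g_1,\hat g_2)=\hat g_2$, $\hat\varepsilon_1(\hat g_1,\hat g_2)=\hat g_1\hat g_2$, $\hat\varepsilon_2(\hat g_1,\hat g_2)=\hat g_1$, so $(\hat\varepsilon_0^*-\hat\varepsilon_1^*+\hat\varepsilon_2^*)\theta = \mathrm{pr}_2^*\theta - m_{\widehat G}^*\theta + \mathrm{pr}_1^*\theta$. The two expressions are literally the same $1$-form, which is the claim.

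So the key steps in order are: (i) record the induced-connection formula $(\hat a\otimes\hat b^{\otimes -1}\otimes\hat c)^*(\delta\theta)=\hat a^*\theta-\hat b^*\theta+\hat c^*\theta$ for tensor sections, which is essentially the definition of $\delta\theta$ on a tensor product of $U(1)$-bundles with connection; (ii) identify $(\pi\times\pi)^*\hat s_{nt}$ with the tautological section $(\hat g_1,\hat g_2)\mapsto\hat g_2\otimes m_{\widehat G}(\hat g_1,\hat g_2)^{\otimes -1}\otimes\hat g_1$, which follows by inspecting the defining formula for $\hat s_{nt}$; (iii) combine (i) and (ii) to get $(\pi\times\pi)^*s^*_{nt}(\delta\theta)=\mathrm{pr}_2^*\theta-m_{\widehat G}^*\theta+\mathrm{pr}_1^*\theta$; (iv) unwind the face operators $\hat\varepsilon_i$ of $N\widehat G$ to see the right-hand side $(\hat\varepsilon_0^*-\hat\varepsilon_1^*+\hat\varepsilon_2^*)\theta$ equals the same $1$-form; (v) conclude by equality of the two.

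The main obstacle is step (ii): being careful about what "$\hat g_2\otimes\widehat{g_1g_2}^{\otimes -1}\otimes\hat g_1$" means as an element of the fiber $({\varepsilon_0}^*\widehat G)_{(g_1,g_2)}\otimes(({\varepsilon_1}^*\widehat G)_{(g_1,g_2)})^{\otimes -1}\otimes({\varepsilon_2}^*\widehat G)_{(g_1,g_2)}$ and checking it is $U(1)\times U(1)$-equivariant in $(\hat g_1,\hat g_2)$ in the right way so that it descends to, and agrees with, $\hat s_{nt}$ on the base — i.e. that replacing $\hat g_i$ by $\hat g_i u_i$ ($u_i\in U(1)$) leaves the tensor unchanged, which is where the centrality of the extension and the bijection $({\varepsilon_1}^*\widehat G)_{(g_1,g_2)}\ni\widehat{g_1g_2}$ get used. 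Once that bookkeeping is pinned down, steps (i), (iii), (iv), (v) are formal. A secondary but purely notational point is matching the sign/orientation conventions in $\hat s_{nt}$ (the factor order $\hat g_2,\ \widehat{g_1g_2},\ \hat g_1$) with the face-operator conventions for $\hat\varepsilon_i$ on $N\widehat G$, which is exactly what makes the $+,-,+$ pattern on both sides line up.
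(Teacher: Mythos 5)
Your argument is correct, and it reaches the same conclusion as the paper by a noticeably more global route. The paper's proof works locally: it chooses an open cover $\{V_\lambda\}$ of $G$ with local sections $\eta_\lambda$ of $\pi$ and local trivializations $\varphi_\lambda$, writes $s_{nt}^*(\delta\theta)$ on each ${\varepsilon}_0^{-1}(V_\lambda)\cap{\varepsilon}_1^{-1}(V_{\lambda'})\cap{\varepsilon}_2^{-1}(V_{\lambda''})$ as ${\varepsilon}_0^*(\eta_\lambda^*\theta)-{\varepsilon}_1^*(\eta_{\lambda'}^*\theta)+{\varepsilon}_2^*(\eta_{\lambda''}^*\theta)+\tau_{\lambda\lambda'\lambda''}^{-1}d\tau_{\lambda\lambda'\lambda''}$ for suitable transition functions $\tau_{\lambda\lambda'\lambda''}$, expands each $\hat{\varepsilon}_i^*\theta$ in the same trivializations, and matches the two expressions term by term. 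In effect the paper verifies by hand, in local coordinates, the two facts you isolate as (i) and (ii): the additivity of the induced connection under tensor sections, and the identity $s_{nt}\circ(\pi\times\pi)=\hat{\varepsilon}_0\otimes\hat{\varepsilon}_1^{\otimes-1}\otimes\hat{\varepsilon}_2$ --- the latter is stated verbatim in the paper's proof and is the pivot of both arguments. Your version takes (i) as part of the definition of $\delta\theta$ on a tensor product of $U(1)$-bundles with connection (which is legitimate in the Brylinski formalism the paper cites), observes that the $\hat{\varepsilon}_i$ are global sections of the pullbacks $(\varepsilon_i\circ(\pi\times\pi))^*\widehat{G}$ because $\pi\circ\hat{\varepsilon}_i=\varepsilon_i\circ(\pi\times\pi)$, and so dispenses with the cover entirely. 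What the paper's local computation buys is self-containedness (it never has to assert the tensor-section formula in the abstract); what your route buys is brevity and the absence of any choice of trivialization. The one point you should write out rather than merely flag is the well-definedness of the tautological section, i.e.\ that replacing $\hat{g}_i$ by $\hat{g}_iu_i$ leaves $\hat{g}_2\otimes(\hat{g}_1\hat{g}_2)^{\otimes-1}\otimes\hat{g}_1$ fixed: the $U(1)$-weights are $u_2\cdot(u_1u_2)^{-1}\cdot u_1=1$, using centrality to write $\widehat{g_1u_1\cdot g_2u_2}=\hat{g}_1\hat{g}_2u_1u_2$; this is exactly the paper's definition of $\hat{s}_{nt}$, so once recorded your step (ii) is complete.
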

\begin{proof}
Choose an open cover $\mathcal{V} = \{ V_{\lambda} \}_{\lambda \in \Lambda}$ of $G$ such that all the intersections of open sets in  $\mathcal{V}$ are contractible and
such that there exist local sections $ {\eta}_{\lambda} : V_{\lambda} \rightarrow \widehat{G}$ of $\pi$.
Then  $ \{ {\varepsilon}_{0} ^{-1} (V_{\lambda} ) \cap {\varepsilon}_{1} ^{-1} (V_{\lambda '} ) \cap {\varepsilon}_{2} ^{-1} (V_{\lambda ''}) \}_{\lambda , {\lambda}' , {\lambda}'' \in \Lambda}  $ is an open covering of $G \times G$ and there are the induced local
sections ${\varepsilon}_{0} ^* \eta _{\lambda} \otimes ({\varepsilon}_{1} ^* \eta _{\lambda '})^{\otimes -1} \otimes {\varepsilon}_{2} ^* \eta _{\lambda ''} $ on that covering.

If we pull back $\delta \theta $ by these sections, the induced form on ${\varepsilon}_{0} ^{-1} (V_{\lambda} ) \cap {\varepsilon}_{1} ^{-1} (V_{\lambda '} ) \cap {\varepsilon}_{2} ^{-1} (V_{\lambda ''})$ is ${\varepsilon}_{0} ^* (\eta _{\lambda} ^* \theta )- {\varepsilon}_{1} ^* (\eta _{\lambda '} ^* \theta )+ {\varepsilon}_{2} ^* ( \eta _{\lambda ''} ^* \theta )$.

We define the $U(1)$-valued functions $\tau_{\lambda \lambda' \lambda''}$ on  ${\varepsilon}_{0} ^{-1} (V_{\lambda} ) \cap {\varepsilon}_{1} ^{-1} (V_{\lambda '} ) \cap {\varepsilon}_{2} ^{-1} (V_{\lambda ''})$
as $ ({\varepsilon}_{0} ^* \eta _{\lambda} \otimes ({\varepsilon}_{1} ^* \eta _{\lambda '})^{\otimes -1} \otimes {\varepsilon}_{2} ^* \eta _{\lambda ''} ) \cdot \tau_{\lambda \lambda' \lambda''}= s_{nt} $.

Then  ${\varepsilon}_{0} ^* (\eta _{\lambda} ^* \theta )- {\varepsilon}_{1} ^* (\eta _{\lambda '} ^* \theta )+ {\varepsilon}_{2} ^* ( \eta _{\lambda ''} ^* \theta )+ \tau_{\lambda \lambda' \lambda''} ^{-1} d \tau_{\lambda \lambda' \lambda''}$ is equal to $s^* _{nt}\delta \theta $ hence $(\pi \times \pi)^*s^* _{nt} \delta \theta = (\pi \times \pi)^*({\varepsilon}_{0} ^* (\eta _{\lambda} ^* \theta )- {\varepsilon}_{1} ^* (\eta _{\lambda '} ^* \theta )+ {\varepsilon}_{2} ^* ( \eta _{\lambda ''} ^* \theta ))+ (\pi \times \pi)^*\tau_{\lambda \lambda' \lambda''} ^{-1} d \tau_{\lambda \lambda' \lambda''} $.

Let $\tilde{{\varphi}}_{\lambda}:{\pi }^{-1}(V_{\lambda}) \rightarrow V_{\lambda} \times U(1) $ be
a local trivialization of $\pi$ and we define ${\varphi}_{\lambda} :={ \rm{pr}}_2 \circ \tilde{{\varphi}}_{\lambda}:\pi ^{-1}(V_{\lambda}) \rightarrow U(1) $.
For any  $\hat{g} \in \pi ^{-1}(V_{\lambda})$ the equation $\hat{g}=\eta_{\lambda} \circ \pi (\hat{g}) \cdot {\varphi}_{\lambda}(\hat{g})$ holds
so we can see
$\hat{{\varepsilon}}_{i} ^* \theta =  {\hat{\varepsilon}}_{i} ^*(\pi ^*(\eta_{\lambda } ^*\theta))
+ \hat{{\varepsilon}}_{i} ^*{\varphi}_{\lambda } ^{-1} d {\varphi}_{\lambda} = (\pi \times \pi)^* {\varepsilon}_{i} ^*(\eta_{\lambda } ^*\theta)
+ \hat{{\varepsilon}}_{i} ^*{\varphi}_{\lambda } ^{-1} d {\varphi}_{\lambda}$ on  $\hat{{\varepsilon}}_{i} ^{-1}(\pi ^{-1}(V_{\lambda})) = (\pi \times \pi)^{-1}(\varepsilon_{i} ^{-1}(V_{\lambda}))$.

Therefore on $(\pi \times \pi)^{-1}(\varepsilon_{0} ^{-1}(V_{\lambda}) \cap \varepsilon_{1} ^{-1}(V_{\lambda '}) \cap \varepsilon_{2} ^{-1}(V_{\lambda ''}))$ there is a differential form $\hat{{\varepsilon}}_{0} ^* \theta
- \hat{{\varepsilon}}_{1} ^* \theta + \hat{{\varepsilon}}_{2} ^* \theta = (\pi \times \pi)^* ({\varepsilon}_{0} ^*(\eta_{\lambda } ^*\theta) - {\varepsilon}_{1} ^*(\eta_{\lambda '} ^*\theta)
+ {\varepsilon}_{2} ^*(\eta_{\lambda ''} ^*\theta))
+ \hat{{\varepsilon}}_{0} ^*{\varphi}_{\lambda } ^{-1} d {\varphi}_{\lambda} 
- \hat{{\varepsilon}}_{1} ^*{\varphi}_{\lambda '} ^{-1} d {\varphi}_{\lambda '} 
+ \hat{{\varepsilon}}_{2} ^*{\varphi}_{\lambda ''} ^{-1} d {\varphi}_{\lambda ''}$.

Since $ \hat{{\varepsilon}}_{i} = (\eta_{\lambda} \circ \pi \circ \hat{{\varepsilon}}_{i}) \cdot {\varphi}_{\lambda} \circ \hat{{\varepsilon}}_{i} = (\eta_{\lambda} \circ {\varepsilon}_{i} \circ (\pi \times \pi))
 \cdot {\varphi}_{\lambda} \circ \hat{{\varepsilon}}_{i}$, we can see that $\hat{{\varepsilon}}_{0}  \otimes
\hat{{\varepsilon}}_{1} ^{\otimes -1} \otimes \hat{{\varepsilon}}_{2} : \widehat{G} \times \widehat{G}
\rightarrow \delta \widehat{G} $ is equal to $(({\varepsilon}_{0} ^* \eta _{\lambda} \otimes ({\varepsilon}_{1} ^* \eta _{\lambda '})^{\otimes -1} \otimes {\varepsilon}_{2} ^* \eta _{\lambda ''} ) \circ (\pi \times \pi))
\cdot ({\varphi}_{\lambda} \circ \hat{{\varepsilon}}_{0})({\varphi}_{\lambda '} \circ \hat{{\varepsilon}}_{1})^{-1}({\varphi}_{\lambda ''} \circ \hat{{\varepsilon}}_{2})$.

We have $ \tau_{\lambda \lambda' \lambda''} \circ (\pi \times \pi) = ({\varphi}_{\lambda} \circ \hat{{\varepsilon}}_{0})({\varphi}_{\lambda '} \circ \hat{{\varepsilon}}_{1})^{-1}({\varphi}_{\lambda ''} \circ \hat{{\varepsilon}}_{2})$ because
$s _{nt}\circ (\pi \times \pi) = \hat{{\varepsilon}}_{0}  \otimes \hat{{\varepsilon}}_{1} ^{\otimes -1} \otimes \hat{{\varepsilon}}_{2}$, so it follows that $(\hat{{\varepsilon}}_{0} ^* - \hat{{\varepsilon}}_{1} ^* + \hat{{\varepsilon}}_{2} ^* )\theta = (\pi \times \pi)^*s^* _{nt}\delta \theta$. 
This completes the proof.

\end{proof}

\section{The String class}
Using the results of Brylinski, McLaughlin \cite{Bry-ML} and Murray, Stevenson \cite{Mur}\cite{Mur2}, we discuss the case of central $U(1)$-extensions of the free loop groups.

\subsection{In the case of special unitary group}

It's known that the second Chern class $c_2 \in H^4(BSU(2))$ of the universal $SU(2)$-bundle $ESU(2) \rightarrow BSU(2)$ is represented in $\Omega ^4(NSU(2))$
as the sum of following $C_{1,3}$ and $C_{2,2}$ (see for example \cite{Jef} or \cite{Suz3}):
$$
\begin{CD}
0 \\
@AA{-d}A \\
C_{1,3} \in {\Omega}^{3} (SU(2) )@>{{\varepsilon}_{0} ^* - {\varepsilon}_{1} ^* + {\varepsilon}_{2} ^*}>>{\Omega}^{3} (SU(2) \times SU(2))\\
@.@AA{d}A\\
@.C_{2,2} \in {\Omega}^{2} (SU(2) \times SU(2))@>{{\varepsilon}_{0} ^* - {\varepsilon}_{1} ^* + {\varepsilon}_{2} ^* - {\varepsilon}_{3} ^*}>> 0
\end{CD}
$$
$$C_{1,3} = \left( \frac{1}{2 \pi i} \right) ^2  \frac{-1}{6}{\rm tr}({h^{-1}dh} )^3 ,
\qquad C_{2,2} = \left( \frac{1}{2 \pi i} \right) ^2  \frac{1}{2}{\rm tr}(h_2 ^{-1} h_1 ^{-1} dh_1 dh_2  ).$$

Pulling back this cocycle by the evaluation map $ev:LSU(2) \times S^1 \rightarrow SU(2) , (\gamma, z) \mapsto \gamma (z)$ and integrating it along the circle,
we obtain the cocycle in $\Omega ^3(NLSU(2))$.
Here $LSU(2)$ is the free loop group of $SU(2)$ and the map $\int _{S^1} ev^*$ is called the transgression map.\\

Now we pose the following problem. ``Is there corresponding central extension of $LSU(2)$ and connection form on it 
such that the DD class in $\Omega ^3(NLSU(2))$ constructed previous section coincides with $\int _{S^1} ev ^* (C_{1,3} +C_{2,2})$?''
In this section, we explain that the central extension and the connection form constructed by Mickelsson \cite{Mic} and Brylinski, McLaughlin \cite{Bry} \cite{Bry-ML}  meet such a condition.\\

To begin with, we recall the definition of the $U(1)$-bundle $\pi : Q(\nu) \rightarrow LSU(2)$ and the multiplication  $m:Q(\nu) \times Q(\nu) \rightarrow Q(\nu)$ in \cite{Bry} \cite{Bry-ML}. We fix any based point $x_0 \in SU(2)$
and denote $\gamma _0 \in LSU(2)$ the constant loop at $x_0$. For any $\gamma \in LSU(2)$, we consider all paths $\sigma _{\gamma} :[0,1] \rightarrow
LSU(2)$ that satisfies $\sigma _{\gamma}(0)= \gamma _0$ and $\sigma _{\gamma}(1)= \gamma$. Then the equivalence relation $\sim$
on  $\{\sigma _{\gamma} \} \times S^1$ is defined as follows:
$$(\sigma _{\gamma},z) \sim (\sigma '_{\gamma},z') \Leftrightarrow z = z' \cdot {\rm{exp}} \left( \int_{I^2 \times S^1} 2 \pi i F^* \nu \right).$$
Here $F:I^2 \times S^1 \rightarrow SU(2)$ is any homotopy map that satisfies $F(0,t,z)=\sigma _{\gamma}(t)(z)$, $F(1,t,z)=\sigma ' _{\gamma}(t)(z)$
and $\nu$ is $C_{1,3} = \left( \frac{-1}{2 \pi i} \right) ^2  \frac{-1}{6}{\rm tr}({h^{-1}dh} )^3$. It's well known $\nu \in \Omega ^3(SU(2))$ is a closed, integral form hence this relation is well-defined.
Now the fiber $\pi ^{-1}(\gamma)$ of $Q(\nu)$
is defined as the quotient space $\{\sigma _{\gamma}\} \times S^1/ \sim $. 

We can adapt the same construction for any closed integral $3$-form on $SU(2)$.
Let $\eta$,$\eta'$ be such $3$-forms and suppose there is a $2$-form $\beta$ with $d\beta = \eta' -\eta$. Then 
the isomorphism from $Q(\eta)$ to $Q(\eta')$ is constructed as:
$$[(\sigma _{\gamma},z)]_{\eta} \mapsto [(\sigma _{\gamma},z \cdot {\rm{exp}} \left(  \int_{I^1 \times S^1} 2 \pi i {\sigma}_{\gamma} ^* \beta \right))]_{\eta'}.$$
Here we regard $\sigma _{\gamma}$ as a map from $[0,1] \times S^1$ to $SU(2)$.

For the face operators $\{ {\varepsilon}_{i}  \}:SU(2) \times SU(2) \rightarrow SU(2)$ (we use the same notation for the face operators $LSU(2) \times LSU(2) \rightarrow LSU(2)$),
we can check ${\varepsilon}_{0} ^* Q(\nu) \otimes {\varepsilon}_{1} ^* Q(\nu)^{\otimes -1} \otimes {\varepsilon}_{2} ^* Q(\nu)$ is isomorphic to $Q({\varepsilon}_{0} ^* \nu -{\varepsilon}_{1} ^* \nu +
{\varepsilon}_{2} ^* \nu) = Q(-dC_{2,2})$ over $LSU(2) \times LSU(2)$. The isomorphism from $Q(0)$ to $Q(-dC_{2,2})$ is
given by 
$$[(\sigma _{\gamma _1},\sigma _{\gamma _2},z)]_{0} \mapsto [(\sigma _{\gamma _1},\sigma _{\gamma _2}, z \cdot {\rm{exp}} \left(  \int_{I^1 \times S^1} 2 \pi i ({\sigma}_{\gamma _1},\sigma _{\gamma _2}) ^* C_{2,2} \right))]_{-dC_{2,2}}.$$
Now we can define the section $s_L$ of ${\varepsilon}_{0} ^* Q(\nu) \otimes {\varepsilon}_{1} ^* Q(\nu)^{\otimes -1} \otimes {\varepsilon}_{2} ^* Q(\nu)
$ over $LSU(2) \times LSU(2)$
as:
$$s_L(\gamma _1, \gamma_2 ) := [(\sigma _{\gamma _1},\sigma _{\gamma _2}, {\rm{exp}} \left(  \int_{I^1 \times S^1} 2 \pi i ({\sigma}_{\gamma _1},\sigma _{\gamma _2}) ^* C_{2,2} \right))]_{-dC_{2,2}}.$$
The multiplication  $m:Q(\nu) \times Q(\nu) \rightarrow Q(\nu)$ is defined by the following equation
$$s_L(\gamma_1,\gamma_2)=([\sigma _{\gamma _1},z_1]_{\varepsilon_0 ^* \nu})\otimes((\gamma_1 \gamma_2),m([\sigma _{\gamma _1},z_1]_{ \nu},
[\sigma _{\gamma _2},z_2]_{ \nu}))^{\otimes -1}\otimes([\sigma _{\gamma _2},z_2]_{\varepsilon_2 ^* \nu}).$$

Next we recall how Brylinski and McLaughlin constructed the connection on $Q(\nu)$. Let denote $P_1SU(2)$ the space of paths on $SU(2)$ which starts 
from based point $x_0$ and $f:P_1SU(2) \rightarrow SU(2)$ a map that is defined by $f(\gamma)=\gamma(1)$.
It is well known that $f$ is a fibration. Then we define the $2$-form $\omega$ on $P_1SU(2)$
as:
$$\omega_{\gamma}(u,v)=\int ^1 _0 \nu \left(  \frac{d \gamma}{dt} ,u(t),v(t) \right)dt.$$
Note that $d \omega = f^* \nu $ holds. Let $\mathcal{U}=\{U_{\iota} \}$ be an open covering of $SU(2)$. Since $SU(2)$ is simply connected, we can take $\mathcal{U}$ such that
each $U_{\iota}$ is contractible and $\{LU_{\iota} \}$ is an open covering of $LSU(2)$. For example, we take $\mathcal{U}=\{U_{x}:= SU(2)-\{x\} |x \in SU(2) \}$.

Now we quote the lemma from \cite{Bry-ML}.
\begin{lemma}[Brylinski, McLaughlin \cite{Bry-ML}]
{(1) There exists a line bundle $L$ over each} $f^{-1}(U_{\iota})$ with a fiberwise connection such that its first Chern form is equal to $ \omega|_{f^{-1}(U_{\iota})}$.
This line bundle is called the pseudo-line bundle.\\
(2) There exists a connection $\nabla$ on each pseudo-line bundle $L$ such that its first Chern form $R$ satisfies the condition
that $R- \omega|_{f^{-1}(U_{\iota})}$ is basic.

\end{lemma}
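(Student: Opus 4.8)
The plan is to obtain (1) by a fibre‑wise version of the path‑and‑exponential construction already used for $Q(\nu)$ above, and then to deduce (2) by a short de~Rham computation on $f^{-1}(U_\iota)$; essentially all of the work sits in (1).

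For (1): first note that $d\omega = f^*\nu$ forces $\omega$ to be fibre‑wise closed, since $f$ is constant along each fibre $f^{-1}(x)$, so $f^*\nu$ annihilates every vector tangent to a fibre and hence so does $d\omega$. Next I would check that the fibre‑wise de~Rham class of $\omega$ is integral. A $2$‑cycle inside a fibre $f^{-1}(x)$, represented by a map $\phi\colon S^2\to f^{-1}(x)$, is the same datum as a map $\widehat\phi\colon S^2\times[0,1]\to SU(2)$, $\widehat\phi(s,t)=\phi(s)(t)$, with $\widehat\phi(\,\cdot\,,0)\equiv x_0$ and $\widehat\phi(\,\cdot\,,1)\equiv x$; because both ends are collapsed, $\widehat\phi$ factors up to homotopy through $S^2\times[0,1]\to S^3$, so $\int_{S^2\times[0,1]}\widehat\phi^*\nu\in\mathbb Z$ by integrality of $\nu$. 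Fibre‑integrating $\widehat\phi^*\nu$ over the path parameter $t$ gives back $\phi^*\omega$ — this is exactly the definition $\omega_\gamma(u,v)=\int_0^1\nu(\dot\gamma,u,v)\,dt$ — so $\int_{S^2}\phi^*\omega=\int_{S^2\times[0,1]}\widehat\phi^*\nu\in\mathbb Z$. Since $U_\iota$ is contractible and $f$ is a fibration, $f^{-1}(U_\iota)$ deformation retracts onto a fibre, which is homotopy equivalent to the based loop space $\Omega SU(2)$ and is therefore connected and simply connected (as $\pi_1(SU(2))=\pi_2(SU(2))=0$); hence $H^2(f^{-1}(U_\iota);\mathbb Z)\cong H^2(f^{-1}(x);\mathbb Z)$ for any $x\in U_\iota$, and the integral fibre‑wise class of $\omega$ is the restriction of an honest integral class on $f^{-1}(U_\iota)$. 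Fixing a smooth section $\sigma_\iota$ of $f$ over $U_\iota$ (available because $U_\iota$ is contractible) to use as a varying basepoint in the fibres, and running the prequantisation construction fibre‑wise — $L\to f^{-1}(U_\iota)$ built from pairs consisting of a path inside the fibre joining $\sigma_\iota(f(\gamma))$ to $\gamma$ and an element of $U(1)$, identified by the weight $\exp\!\bigl(2\pi i\int\omega\bigr)$ over a fibre‑wise disk, a construction that is well defined precisely because of the fibre‑wise integrality and simple connectivity just established — yields the pseudo‑line bundle together with a tautological fibre‑wise connection whose fibre‑wise first Chern form is $\omega|_{f^{-1}(U_\iota)}$.

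For (2): extend the fibre‑wise connection of (1) to a full connection $\nabla_0$ on $L$ over $f^{-1}(U_\iota)$; this is possible because any partial connection extends, e.g.\ by fixing an Ehresmann connection for $f$ together with a background connection on $L$ and patching with a partition of unity. Let $R_0$ be the first Chern form of $\nabla_0$. By construction $R_0$ and $\omega$ have the same pullback to every fibre, so $R_0-\omega$ vanishes on pairs of vertical vectors. Since $H^3(U_\iota;\mathbb R)=0$, choose a $2$‑form $\tau$ on $U_\iota$ with $d\tau=-\nu|_{U_\iota}$. Then $f^*\tau+\omega$ is closed on $f^{-1}(U_\iota)$, because $d(f^*\tau+\omega)=f^*(d\tau+\nu)=0$, and its class restricts to $0$ on each fibre: there $f^*\tau$ vanishes and $\omega$, $R_0$ restrict to the same fibre‑wise first Chern form. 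By the deformation retraction above the restriction map on $H^2$ is injective, so $[f^*\tau+\omega-R_0]=0$ in $H^2(f^{-1}(U_\iota);\mathbb R)$, i.e.\ $f^*\tau+\omega-R_0=d\alpha$ for some $1$‑form $\alpha$. Shifting the connection $1$‑form of $\nabla_0$ by $\alpha$ produces a connection $\nabla$ whose first Chern form is $R=R_0+d\alpha=f^*\tau+\omega$, so that $R-\omega|_{f^{-1}(U_\iota)}=f^*\tau$ is basic — in fact it is the pullback of a $2$‑form from $U_\iota$, which is stronger than needed.

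The main obstacle is the first part, and inside it the requirement that the fibre‑wise prequantum data be produced uniformly over $U_\iota$: one needs a single smooth line bundle carrying a smooth fibre‑wise connection on the whole of $f^{-1}(U_\iota)$, not a collection of unrelated choices on individual fibres. This is why I would run the explicit paths‑and‑exponentials construction with basepoints tracking a section $\sigma_\iota$ of $f$, rather than attempt to glue fibre by fibre; the other delicate point is the fibre‑wise integrality of $\omega$, which is handled by the suspension/transgression computation above. Once (1) is available in this explicit form, part (2) is the routine cohomological adjustment just described.
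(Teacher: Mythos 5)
The paper does not actually prove this lemma: it is imported verbatim from Brylinski--McLaughlin \cite{Bry-ML}, so there is no internal proof to compare against, and your argument is a genuine addition rather than an alternative route. Judged on its own it is essentially sound. The two substantive points in (1) --- fibrewise closedness of $\omega$ (since $f^*\nu$ kills vertical vectors) and fibrewise integrality via the adjoint map $\widehat{\phi}\colon S^2\times[0,1]\to SU(2)$ factoring through the suspension $S^3$, combined with connectedness and simple connectivity of the fibre $\simeq \Omega SU(2)$ --- are exactly what make the paths-and-exponentials (prequantisation) construction of $L$ work fibrewise; note this is the same mechanism the paper uses one degree up to build $Q(\nu)$ from $\nu$, so your proof is in the spirit of the surrounding text. (Your fibrewise reading of ``first Chern form'' in (1) is forced, since $\omega$ is not closed and so cannot be a genuine curvature.) Part (2) is also correct: with $d\tau=-\nu|_{U_\iota}$ the form $f^*\tau+\omega-R_0$ is closed and restricts to zero on a fibre, and injectivity of restriction on $H^2$ (deformation retraction over the contractible $U_\iota$) gives $f^*\tau+\omega-R_0=d\alpha$, whence $R-\omega=f^*\tau$ is basic. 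Two caveats, which are consistent with the paper's standing hypotheses rather than fatal gaps: (i) smoothness and local triviality of the fibrewise prequantum bundle over all of $f^{-1}(U_\iota)$, not merely fibre by fibre, needs the section $\sigma_\iota$ you introduce plus a smooth local choice of connecting paths and spanning discs --- this deserves more than the phrase ``running the construction fibre-wise''; (ii) the steps ``closed and null-cohomologous implies exact'' and ``partitions of unity exist'' on the infinite-dimensional manifold $f^{-1}(U_\iota)\subset P_1SU(2)$ require the de Rham theorem and smooth paracompactness in the setting the paper assumes throughout (citing \cite{KM}, \cite{Pal}). With those provisos your proof is a legitimate, self-contained replacement for the citation.
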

Let $K$ be a $2$-form on $U_{\iota}$ which satisfies $f^*K = 2 \pi i ( R- \omega|_{f^{-1}(U_{\iota})})$. Then the $1$-form $\theta_{\iota}$ on $LU_{\iota}$ 
is defined by $\theta_{\iota} := \int _{S^1} ev ^*K$. It is easy to see $ \left( \frac{-1}{2 \pi i} \right)d \theta_{\iota} = (\int_{S^1} ev^* \nu) |_{LU_{\iota}}$.\\

There is a section $s_{\iota}$ on $LU_{\iota}$ defined by $s_{\iota}(\gamma) := [\sigma _{\gamma }, H_{\sigma _{\gamma }}(L,\nabla)]$.
Here $H_{\sigma _{\gamma }}(L,\nabla)$ is the holonomy of $(L,\nabla)$ along the loop $\sigma_{\gamma}:S^1 \rightarrow f^{-1}(U_{\iota})$.
We also have the corresponding local trivialization ${\varphi}_{\iota}:\pi ^{-1}(U_{\iota}) \rightarrow U_{\iota} \times U(1)$.\\

From above, we have the connection form $\theta$ on $Q(\nu)$ defined by $\theta |_{\pi ^{-1}(U_{\iota})} := \pi ^* \theta_{\iota} + d {\rm{log}}({\rm{pr_2}} \circ {\varphi}_{\iota})$. Its first Chern form $c_1(\theta)$ is $\int_{S^1} ev^* \nu$ and $d \delta \theta $ is equal to $(-2 \pi i)\cdot \int _{S^1}ev^*(({\varepsilon}_{0} ^* - {\varepsilon}_{1} ^* + {\varepsilon}_{2} ^*) \nu) = (-2 \pi i)\cdot \pi ^* \left( - d \int _{S^1}ev^*C_{2,2} \right)$
hence $\delta \theta + (-2 \pi i)\cdot \pi^* \int _{S^1}ev^*C_{2,2} $ is a flat connection on $\delta Q(\nu)$. Since $LSU(2)$ is simply connected,
it is a trivial connection so $s_L ^* ( \delta \theta + (-2 \pi i)\cdot \pi^* \int _{S^1}ev^*C_{2,2}) = 0$.\\

So as a reformulation of the Brylinski and McLaughlin's result, we obtain the proposition below.
\begin{proposition}[\cite{Bry-ML}]
{Let} $( Q(\nu),\theta)$ be a $U(1)$-bundle on $LSU(2)$ with connection and $s_L$ be a global section of $\delta Q(\nu)$ constructed above.
Then the cocycle $c_1(\theta) - \left( \frac{-1}{2 \pi i} \right){s_L}^{*} (\delta \theta )$ on $\Omega ^3(NLSU(2))$ is
equal to $\int _{S^1} ev ^* (C_{1,3} +C_{2,2})$, i.e. the map $\int _{S^1} ev ^*$ sends the second Chern class $c_2 \in H^4(BSU(2))$ to the Dixmier-Douady class
(associated to $Q(\nu)$) in $H^3(BLSU(2))$.\hspace{9em} $ \Box $
\end{proposition}

\bigskip

\subsection{In the case of unitary group}

In the case of unitary group $U(2)$, the second Chern class is represented as the sum of following $C^U _{1,3}$ and $C^U _{2,2}$ (see \cite{Suz3}):

$$
\begin{CD}
0 \\
@AA{-d}A \\
C^U _{1,3} \in {\Omega}^{3} (U(2) )@>{{\varepsilon}_{0} ^* - {\varepsilon}_{1} ^* + {\varepsilon}_{2} ^*}>>{\Omega}^{3} (U(2) \times U(2))\\
@.@AA{d}A\\
@.C^U _{2,2} \in {\Omega}^{2} (U(2) \times U(2))@>{{\varepsilon}_{0} ^* - {\varepsilon}_{1} ^* + {\varepsilon}_{2} ^* - {\varepsilon}_{3} ^*}>> 0
\end{CD}
$$
$$C^U _{1,3} = \left( \frac{1}{2 \pi i} \right) ^2  \frac{-1}{6}{\rm tr}({h^{-1}dh} )^3 \hspace{17em} $$
$$C^U _{2,2} = \left( \frac{1}{2 \pi i} \right) ^2  \frac{1}{2}{\rm tr}(h_2 ^{-1} h_1 ^{-1} dh_1 dh_2  )- \left( \frac{1}{2 \pi i} \right) ^2  \frac{1}{2}{\rm tr}( h_1 ^{-1} dh_1){\rm tr}( h_2 ^{-1}dh_2  ).$$

\bigskip
Recall that any element of $U(2)$ can be represented as follows:
$$ U(2) = \bigg\{ 
\left (\begin{array}{cc}
   \ \alpha & - z \overline{\beta} \\
   \  \beta & z \overline{\alpha} \\
\end{array} \right) 
: \alpha,\beta,z \in \mathbb{C} , |z|=1, |\alpha|^2 +|\beta|^2 =1 \bigg\}.
$$

We recognize $U(2)$ as a semi-direct product group $SU(2) \rtimes U(1)$. Here we define the product $\rtimes$ as:
$$ (A_1,z_1) \rtimes (A_2,z_2) := (A_1
\left (\begin{array}{cc}
   \ 1 & 0 \\
   \ 0 & z_1 \\
\end{array} \right) A_2
\left (\begin{array}{cc}
   \ 1 & 0 \\
   \ 0 & z_1 \\
\end{array} \right)^{-1},z_1z_2).
$$

Let denote $\Omega U(1)$ the based loop group of $U(1)$. Then any element $\gamma$ in $LU(2)$ is decomposed as $\gamma = (\gamma_1, \gamma_2, z) \in LSU(2) \rtimes (\Omega U(1) \rtimes U(1))$. Each connected component of $LU(2)$ is parametrized by the mapping degree of $\gamma _2$.
We write $\Omega U(1)_n$,$LU(2)_n$ the connected components which include a based loop $\gamma_2$ whose mapping degree is $n$.
We can see $\pi_1(LU(2)_0) = \pi_1(LSU(2)) \oplus \pi_1(LU(1)_0) = \pi_1(LU(1)_0) = \pi_1(\Omega U(1)_0) \oplus \pi_1(U(1)) \cong \mathbb{Z}$.
There is a homeomorphism from $\Omega U(1)_0$ to $\Omega U(1)_n$ defined by $\gamma \mapsto \gamma \cdot (e^{is} \mapsto e^{ins})$ for any $n$
so $\pi_1(LU(2)_n)$ is also isomorphic to $\mathbb{Z}$.
The generator $\psi_n$ of $\pi_1(LU(2)_n) \cong H_1(LU(2)_n)$ is the map defined as $\psi_n(e^{it}) := (e^{is} \mapsto \binom{1 \quad 0}{0 e^{i(ns+t)}})$ hence any cycle $a \in Z_1(LU(2)_n)$ can be written as $a=m\psi_n + \partial \varrho $ for some $2$-chain $\varrho$
and $m \in {\mathbb Z}$.\\

Since $LU(2)$ is not simply connected we need the differential character $k$ to construct a principal $U(1)$-bundle over $LU(2)$.
Differential character is a homomorphism from $Z_1(LU(2))$ to $U(1)$ such that there exists a specific $2$-form $\omega$
satisfying $k(\partial \varrho) = {\rm{exp}}( \int_{\varrho} 2 \pi i\omega )$ for any $2$-singular chains ${\varrho}$ of $LU(2)$ (\cite{Chee} see also \cite{Mur2}).

We set $\Phi _{1,3}:= \int_{S^1} ev^* C^U _{1,3}$ and  define $k$ as $k(a) :={\rm{exp}}( \int_{\varrho} 2 \pi i \Phi_{1,3})$. This is well-defined
since  $\Phi_{1,3}$ is integral.

Now the equivalence relation $\sim$
on  $\{\sigma _{\gamma} \} \times S^1$ is defined as follows:
$$(\sigma _{\gamma},z) \sim (\sigma '_{\gamma},z') \Leftrightarrow z = z' \cdot   k(\sigma ^{-1}_{\gamma} \circ\sigma '_{\gamma}).$$

Then we obtain a $U(1)$-bundle $Q(k)$ over $LU(2)$. We can check ${\varepsilon}_{0} ^* Q(k) \otimes {\varepsilon}_{1} ^* {Q}(k)^{\otimes -1} \otimes {\varepsilon}_{2} ^* Q(k)$ is isomorphic to $Q(({\varepsilon}_{0} ^*  -{\varepsilon}_{1} ^*  +
{\varepsilon}_{2} ^* )k)$ over $LU(2) \times LU(2)$.

We set $\Phi _{2,2}:= -\int_{S^1} ev^* C^U _{2,2}$. Since  $\Phi_{2,2}$ is integral, the following equation holds for any $(a_1,a_2)= (m_1 \psi^1_{n_1} + \partial \varrho_1, m_2 \psi^2_{n_2} + \partial \varrho_2) \in Z_1(LU(2) \times LU(2))$:
$$(({\varepsilon}_{0} ^*  -{\varepsilon}_{1} ^*  +
{\varepsilon}_{2} ^* )k)(a_1,a_2)=k(({\varepsilon_0}_*-{\varepsilon_1}_*+{\varepsilon_2}_*)(a_1,a_2))$$
$$={\rm exp} \int_{({\varepsilon_0} _*-{\varepsilon_1}_*+{\varepsilon_2}_*)(\varrho_1,\varrho_2)}2 \pi i\Phi_{1,3}$$
$$={\rm exp} \int_{(\varrho_1,\varrho_2)}2 \pi i(\varepsilon_0 ^*-\varepsilon_1 ^*+\varepsilon_2 ^*)\Phi_{1,3}={\rm exp} \int_{(\varrho_1,\varrho_2)}2 \pi i (d\Phi _{2,2})$$
$$={\rm exp} \int_{(\partial\varrho_1,\partial\varrho_2)}2 \pi i\Phi _{2,2}= {\rm exp} \int_{(a_1,a_2)} 2 \pi i\Phi _{2,2}=1.$$

Therefore ${\varepsilon}_{0} ^* Q(k) \otimes {\varepsilon}_{1} ^* {Q}(k)^{\otimes -1} \otimes {\varepsilon}_{2} ^* Q(k)$ is 
a trivial bundle, so repeating the same argument in section 4.1, we obtain a central $U(1)$-extension of $LU(2)$.

\section{Cocycle in the triple complex}
In this section we deal with a semi-direct product $LG \rtimes S^1$ for $G=SU(2)$. Here we define a group action of $S^1$
on $LG$ by the adjoint action of $S^1$ on $SU(2)$, i.e. we fix a semi-direct product operator $\cdot_{\rtimes}$of $LG \rtimes S^1$ as $(\gamma, z) \cdot_{\rtimes} ({\gamma}', z') := ({\gamma}\cdot z{\gamma}'z^{-1}, zz')$. So in this case $LG \rtimes S^1$ is a subgroup of $LU(2)$.

First we define bisimplicial manifolds $NLG(*) \rtimes NS^1(*)$ and $PLG(*) \times PS^1(*)$. A bisimplicial manifold is a sequence of manifolds with horizontal and vertical face and degeneracy operators which commute with each other.
A bisimplicial map is a sequence of maps commuting with horizontal and vertical face and degeneracy operators.
We define $NLG(*) \rtimes NS^1(*)$ as follows:
$$NLG(p) \rtimes NS^1(q)  := \overbrace{LG \times \cdots \times LG }^{p-times} \times \overbrace{S^1 \times \cdots \times S^1 }^{q-times} $$  
Horizontal face operators \enspace ${\varepsilon}_{i}^{LG} : NLG(p) \rtimes NS^1(q) \rightarrow NLG(p-1)  \rtimes NS^1(q) $ are the same with the face operators of $NLG(p)$. Vertical face operators \enspace ${\varepsilon}_{i}^{S^1} : NLG(p) \rtimes NS^1(q) \rightarrow NLG(p)  \rtimes NS^1(q-1) $ are
$$
{\varepsilon}_{i}^{S^1}(\vec{\gamma}, z_1 , \cdots , z_q )=\begin{cases}
(\vec{\gamma}, z_2 , \cdots , z_q )  &  i=0 \\
(\vec{\gamma}, z_1 , \cdots ,z_i z_{i+1} , \cdots , z_q )  &  i=1 , \cdots , q-1 \\
(z_q \vec{\gamma}z^{-1} _{q}, z_1 , \cdots , z_{q-1} )  &  i=q
\end{cases}
$$
Here $\vec{\gamma}=(\gamma_1, \cdots , \gamma_p)$. $PLG(*) \times PS^1(*)$ is defined as:  
$$PLG(p) \times PS^1(q)  := \overbrace{LG \times \cdots \times LG }^{p+1-times} \times \overbrace{S^1 \times \cdots \times S^1 }^{q+1-times}. $$
Its $i$-th face operator is the map which omits the $i$-th factor.
Then we define a bisimplicial map $\rho_{\rtimes} : P{LG}(p) \times P{S^1}(q) \rightarrow NLG(p) \rtimes NS^1(q) $ as $ \rho_{\rtimes} (\vec{\gamma}, z_1, \cdots ,z_{q+1} ) = ( z _{q+1}\rho (\vec{\gamma}) z^{-1} _{q+1} ,
 \rho (z_1, \cdots, z_{q+1}))$ where $\rho$ is defined as $\rho (\vec{\gamma}) := (\gamma_1 \gamma_2 ^{-1}, \cdots , \gamma_p \gamma_{p+1} ^{-1})$.

$LG \rtimes S^1$ acts on $ P{LG}(p) \times P{S^1}(q)$ by right as $(\vec{\gamma},\vec{z})\cdot(\gamma,z) = (z^{-1}\vec{\gamma}\cdot {\gamma}z, \vec{z}z)$. Since $\rho_{\rtimes}(\vec{\gamma},\vec{z})=
\rho_{\rtimes}((\vec{\gamma},\vec{z})\cdot(\gamma,z))$, one can see that $\rho_{\rtimes}$ is a principal  $(LG \rtimes S^1)$-bundle. $\parallel P{LG}(*) \times P{S^1}(*) \parallel$ is a product space of total spaces of the universal $LG$-bundle and the universal $S^1$-bundle. We can check also that $\parallel P{LG}(*) \times P{S^1}(*) \parallel \rightarrow \parallel NLG(*) \rtimes NS^1(*) \parallel$ is a principal $LG \rtimes S^1$-bundle since $LG$ and $S^1$ are ANR. Hence $\parallel NLG(*) \rtimes NS^1(*) \parallel$ is a model of $B(LG \rtimes S^1)$.
 
\begin{definition}
\rm{For a bisimplicial manifold} $NLG(*) \rtimes NS^1(*)$, we have a triple complex as follows:
$${\Omega}^{p,q,r} (NLG(*) \rtimes NS^1(*)) := {\Omega}^{r} (NLG(p) \rtimes NS^1(q)). $$

Derivatives are:
$$ d' = \sum _{i=0} ^{p+1} (-1)^{i} ({{\varepsilon}^{LG} _{i}}) ^{*}  , \qquad  d'' = \sum _{i=0} ^{q+1} (-1)^{i} ({{\varepsilon}^{S^1} _{i}}) ^{*} \times (-1)^{p} $$
$$ d''' =  (-1)^{p+q} \times {\rm the \enspace exterior \enspace differential \enspace on \enspace }{ \Omega ^*(NLG(p) \rtimes NS^1(q)) }.$$

\end{definition}
$\hspace{30em} \Box $\\
The following proposition can be proved by adapting the same argument in the proof of Theorem 2.1 (See \cite{Suz2}).
\begin{proposition}
{There exists an isomorphism}
$$ H({\Omega}^{*} (NLG \rtimes NS^1))  \cong  H^{*} (B(LG \rtimes S^1) ).$$

{ Here} ${\Omega}^{*} (NLG \rtimes NS^1)$ { means the total complex}.
\end{proposition}
$\hspace{30em} \Box $\\
Now we want to construct a cocycle in ${\Omega}^{3} (NLG \rtimes NS^1)$ which coincides with $c_1(\theta) - \left( \frac{-1}{2 \pi i} \right){s_L}^{*} (\delta \theta )$  when it is restricted to ${\Omega}^{3} (NLG)$.

To do this, it suffice to construct a differential form $\tau$ in $\Omega^1(LG \rtimes S^1)$ such that
$d \tau = (-{{\varepsilon}_{0}^{S^1}}^*+{{\varepsilon}_{1}^{S^1}}^*)c_1(\theta)$ and $({{{\varepsilon}^{LG} _{0}}}^*-{{{\varepsilon}^{LG} _{1}}}^*
+{{{\varepsilon}^{LG} _{2}}}^*)\tau =({{\varepsilon}_{0}^{S^1}}^*-{{\varepsilon}_{1}^{S^1}}^*)\left( \frac{-1}{2 \pi i} \right){s_L}^{*} (\delta \theta )$ and 
$(-{{\varepsilon}_{0}^{S^1}}^*+{{\varepsilon}_{1}^{S^1}}^*- {{\varepsilon}_{2}^{S^1}}^*)\tau =0$. We consider a trivial
$U(1)$-bundle $({{\varepsilon}_{0}^{S^1}}^*Q)^{\otimes -1}\otimes {{\varepsilon}_{1}^{S^1}}^*Q$ and the induced connection 
form $\delta_{\rtimes} \theta$ on it. We define a section $s_{\rtimes}:LG \rtimes S^1 \rightarrow ({{\varepsilon}_{0}^{S^1}}^*Q)^{\otimes -1}\otimes {{\varepsilon}_{1}^{S^1}}^*Q$ as $s_{\rtimes}(\gamma,z):=(\hat{\gamma},z)^{\otimes -1}\otimes(z^{-1} \hat{\gamma}z,z)$ and set $\tau :=
\left( \frac{-1}{2 \pi i} \right){s_{\rtimes}}^{*} (\delta _{\rtimes}\theta )$ then we can see
that  $\tau$ satisfies the required conditions.

\end{document}